\newtheorem{result}{\ }[section]
\theoremstyle{changebreak}                
\newtheorem{theorem}[result]{Theorem}
\newtheorem{lemma}[result]{Lemma}
\newenvironment{proof}
 {{\sl Proof.}\hspace*{1 ex}}%
 {{\nopagebreak\hspace*{\fill}$\Box$\par\vspace{12pt}}}
\newcommand{\eps}{\varepsilon}
\definecolor{plgreen}{rgb}{0,0.5,0}
\newcommand{\LEO}[1]{{\color{black}#1}}
\newcommand{\KY}[1]{{\color{black}#1}}
\renewcommand{\Im}{\mbox{\sf Im}}
\newcommand{\Ims}{\mbox{\sf\scriptsize Im}}
\begin{document}

\thispagestyle{empty}
\begin{center} 

{\LARGE Random projections for trust region subproblems}
\par \bigskip
{\sc Ky Vu${}^1$, Pierre-Louis Poirion${}^2$, Claudia D'Ambrosio${}^3$, Leo Liberti${}^3$}
\par \bigskip
 {\small
   \begin{enumerate}
   \item {\it Chinese University of Hong Kong}
   \item {\it Huawei Research Center, Paris}
   \item {\it CNRS LIX, \'Ecole Polytechnique, F-91128 Palaiseau, France} \\ Email:\url{{dambrosio,liberti}@lix.polytechnique.fr}
   \end{enumerate}
 }
\par \medskip \today
\end{center}
\par \bigskip

\begin{abstract}
  The trust region method is an algorithm traditionally used in the field of derivative free optimization. The method works by iteratively constructing surrogate models (often linear or quadratic functions) to approximate the true objective function inside some neighborhood of a current iterate. The neighborhood is called ``trust region'' in the sense that the model is trusted to be good enough inside the neighborhood. Updated points are found by solving the corresponding trust region subproblems. In this paper, we describe an application of random projections to solving trust region subproblems approximately. 
\end{abstract}

\section{Introduction}
Derivative free optimization (DFO) (see \cite{conn2009, Kramer2011}) is a field of optimization including techniques for solving optimization problems in the absence of derivatives. This often occurs in the presence of {\it black-box functions}, i.e.~functions for which no compact or computable representation is available. It may also occur for computable functions having unwieldy representations (e.g.~a worst-case exponential-time or simply inefficient evaluation algorithm). A DFO problem in general form is defined as follows:
\[ \min \; \{ f(x) \; | \; x \in \mathcal{D}\},\]
where $\mathcal{D}$ is a subset of $ \mathbb{R}^n$ and $f(\cdot)$ is a continuous function such that no derivative information about it is available \cite{Conn1997}.

As an example, consider a simulator $f(p,y)$ where $p$ are parameters and $y$ are state variables indexed by timesteps $t$ (so $y=(y^t\;|\;t\le h)$), including some given boundary conditions $y^0$. The output of the simulator is a vector at each timestep $t\le h$. We denote this output in function of parameters and state variables as $f^t(p,y)$. We also suppose that we collect some actual measurements from the process being simulated, say $\hat{f}^t$ for some $t\in H$, where $H$ is a proper index subset of $\{1,\ldots,h\}$. We would like to choose $p$ such that the behaviour of the simulator is as close as possible to the observed points $\hat{f}^t$ for $t\in H$. The resulting optimization problem is:
\[ \min \left\{ \bigoplus\limits_{t\in H} \|\hat{f}^t - f^t(p,y)\| \;\bigg|\; p \in \mathcal{P}\land y\in \mathcal{Y} \right\}, \]
where $\mathcal{P}$ and $\mathcal{Y}$ are appropriate domains for $p,y$ respectively, $\|\cdot\|$ is a given norm, and $\oplus$ is either $\sum$ or $\max$.

This is known to be a black-box optimization problem whenever $f(\cdot)$ is given as an {\it oracle}, i.e.~the value of $f$ can be obtained given inputs $p,y$, but no other estimate can be obtained directly. In particular, the derivatives of $f(\cdot)$ w.r.t.~$p$ and $y$ are assumed to be impossible to obtain. Note that the lack of derivatives essentially implies that it is hard to define and therefore compute local optima. This obviously makes it even harder to compute global optima. This is why most methods in DFO focuses on finding local optima.

Trust Region (TR) methods are considered among the most suitable methods for solving DFO problems \cite{Conn1996,conn2009,Marazzi2002}. TR methods involve the construction of surrogate models to approximate the true function (locally) in ``small'' subsets $D\subset\mathcal{D}$, and rely on those models to search for optimal solutions within $D$. Such subsets are called {\it trust regions} because the surrogate models are ``trusted'' to be ``good enough'' limited to $D$. TRs are often chosen to be closed balls $B(c,r)$ (with $c$ center and $r$ radius) with respect to some norms. There are several ways to obtain new data points, but the most common way is to find them as minima of the current model over the current trust region. Formally, one solves a sequence of so-called {\it TR subproblems}:
\begin{eqnarray*} 
 \min \, \{m(x) \; | \;  x  \in B(c, r) \cap \mathcal{D} \},
\end{eqnarray*}
where $m(\cdot)$ is a surrogate model of the true objective function $f(\cdot)$, which will then be evaluated at the solution of the TR subproblems. Depending on the discrepancy between the model and the true objective, the balls $B(c,r)$ and the models $m(\cdot)$ are updated: the TRs can change radius and/or center. The iterative application of this idea yields a TR method (we leave the explanation of TR methods somewhat vague since it is not our main focus; see \cite{conn2009} for more information).

In this paper we assume that the TR subproblem is defined by a linear or quadratic model $m(\cdot)$ and that $\mathcal{D}$ is a full-dimensional polyhedron defined by a set of linear inequality constraints. After appropriate scaling, the TR subproblem can be written as:
\begin{equation}  
  \min  \{x^{\top} Q x + c^{\top} x \; | \;  A x \le  b, \; \|x\| \le 1 \}, \label{TR:main-prob}
\end{equation}
where $Q \in \mathbb{R}^{n \times n}$ ($Q$ is not assumed to be positive semidefinite), $A \in \mathbb{R}^{m \times n}$, $b \in \mathbb{R}^{m}$. Here, $\| \cdot \|$ refers to Euclidean norm and such notation will be used consistently throughout the paper. This problem has been studied extensively \cite{Bienstock,Jeyakumar,Salahi2016,Burer2015}. Specifically, its complexity status varies between $\mathbf{P}$ and $\mathbf{NP}$-hard, meaning it is $\mathbf{NP}$-hard in general \cite{murty}, but many polynomially solvable sub-cases have been discovered. For example, without linear constraints $Ax \le b$  the TR subproblem can be solved in polynomial time \cite{Ye:1992}; in particular, it can be rewritten as a Semidefinite Programming (SDP) problem. When adding one linear constraint or two parallel constraints, it still remains polynomial-time solvable \cite{Ye03,sturm2003cones}; see the introduction to \cite{Bienstock} for a more comprehensive picture.

\subsection{The issue\dots}
In practice, the theoretical complexity of the TR subproblem Eq.~\eqref{TR:main-prob} is a moot point, since the time taken by each function evaluation $f(p,x)$ is normally expected to exceed the time taken to solve Eq.~\eqref{TR:main-prob}. Since the solution sought is not necessarily global, any local Nonlinear Programming (NLP) solver \cite{snopt,ipopt} can be deployed on Eq.~\eqref{TR:main-prob} to identify a locally optimal solution. The issue we address in this paper arises in cases when the sheer number of variables in Eq.~\eqref{TR:main-prob} prevents even local NLP solvers from converging to some local optima in acceptable solution times. The usefulness of TR methods is severely hampered if the TR subproblem solution phase represents a non-negligible fraction of the total solution time. As a method for addressing the issue we propose to solve the TR subproblem {\it approximately} in exchange for speed. We justify our choice because the general lack of optimality guarantees in DFO and black-box function optimization makes it impossible to evaluate the loss one would incur in trading off local optimality guarantees in the TR subproblem for an approximate solution. As an approximation method, we consider {\it random projections}.

Random projections are simple but powerful tools for dimension reduction \cite{Woodruff, vu2015, pilanci2014, jll_ctw16}. They are often constructed as random matrices sampled from some given distribution classes. The simplest examples are matrices sampled componentwise from independently identically distributed (i.i.d.) random variables with Gaussian $\mathcal{N}(0,1)$, uniform $[-1,1]$ or Rademacher $\pm 1$  distributions. Despite their simplicity, random projections are competitive with more popular dimensional reduction methods such as Principal Component Analysis (PCA) / Multi-Dimensional Scaling (MDS) \cite{jolliffe_10}, Isomap \cite{tenenbaum_00} and more. One of the most important features of a random projection is that it approximately preserves the norm of any given vector with high probability. In particular, let $P \in \mathbb{R}^{d \times n}$ be a random projection (such as those introduced above), then for any $x  \in \mathbb{R}^{n}$ and $\varepsilon \in (0,1)$, we have
\begin{equation} \label{RP}
\mbox{\sf Prob} \bigg[ (1 - \varepsilon) \|x\|^2 \le \|Px\|^2 \le (1 + \varepsilon) \|x\|^2\bigg] \ge 1-2e^{-\mathcal{C}\varepsilon^2 d},
\end{equation}
where $\mathcal{C}$ is a {\it universal constant} (in fact a more precise statement should be existentially quantified by ``there exists a constant $\mathcal{C}$ such that\dots'').

Perhaps the most famous application of random projections is the so-called \emph{Johnson-Lindenstrauss lemma} \cite{jllemma}. It states that for any $\varepsilon \in (0,1)$ and for any finite set $X \subseteq \mathbb{R}^n$, there is a mapping $F:\mathbb{R}^n \to \mathbb{R}^d$, in which $d = O(\frac{\log |X|}{\varepsilon^2})$, such that
\[\forall x, y \in X \qquad (1 - \varepsilon) \|x - y\|^2 \le \|F(x) - F(y)\|^2 \le (1 + \varepsilon) \|x - y\|^2 .\]
Such a mapping $F$ can be found as a realization of the random projection $P$ above; and the existence of the correct mapping is shown (by the probabilistic method) using the union bound. Moreover, the probability of sampling a correct mapping is also very high, i.e. in practice there is often no need to re-sample $P$.

\subsection{\dots and how we address it}
The object of this paper is the applicability of random projections to TR subproblems Eq.~\eqref{TR:main-prob}. Let $P \in \mathbb{R}^{d \times n}$ be a random projection with i.i.d.~$\mathcal{N}(0,1)$ entries. We want to ``project" each vector $x \in \mathbb{R}^n$ to a lower-dimension vector $Px \in \mathbb{R}^d$ and study the following \emph{projected problem}
\begin{eqnarray*} 
 \min \, \{x^{\top} (P^\top P Q P^\top P) x + c^{\top} P^\top P x \; | \;  A P^\top P x \le  b, \; \|Px\| \le 1 \}.
\end{eqnarray*}
By setting $u = Px, \; \bar{c} = Pc, \; \bar{A} = A P^\top$, we can rewrite it as 
\begin{eqnarray}  
 \min_{u \in \;\Ims(P)} \, \{u^{\top} (P Q P^\top) u + \bar{c}^{\top} u \; | \;  \bar{A} u \le  b, \; \|u\| \le 1 \},
\end{eqnarray}
where $\Im(P)$ is the image space generated by $P$. Intuitively, since $P$ is a projection from a (supposedly very high dimensional) space to a lower dimensional space, it is very likely to be a surjective mapping. Therefore, we assume it is safe to remove the constraint  $u \in \;\Im(P)$ and study the smaller dimensional problem:
\begin{eqnarray}  \label{TR:proj-prob}
\min_{u \in \mathbb{R}^d} \, \{u^{\top} (P Q P^\top) u + \bar{c}^{\top} u \; | \;  \bar{A} u \le  b, \; \|u\| \le 1 \},
\end{eqnarray}
where $u$ ranges in $\mathbb{R}^d$. As we will show later, Eq.~\eqref{TR:proj-prob}  yields a good approximate solution of the TR subproblem with very high probability.

\section{Random projections for linear and quadratic models}
In this section, we will explain the motivations for the study of the projected problem (\ref{TR:proj-prob}).

We start with the following simple lemma, which says that linear and quadratic models can be approximated well using random projections.

\subsection{Approximation results}
\begin{lemma} \label{jll-approx}
Let $P: \mathbb{R}^n \to \mathbb{R}^d$ be a random projection satisfying Eq.~\eqref{RP} and let $0 < \varepsilon < 1$. Then there is a universal constant $\mathcal{C}_0$ such that
\begin{itemize}
\item[(i)] For any $x, y \in \mathbb{R}^n$:
\[\KY{\langle x, y\rangle - \varepsilon \|x\|\,\|y\| \le \langle Px, Py \rangle \le \langle x, y\rangle + \varepsilon \|x\|\,\|y\|}\]
with probability at least $1 - 4e^{-\mathcal{C}_0\varepsilon^2 d}$.
\item[(ii)] For any $x\in \mathbb{R}^n$ and $A \in  \mathbb{R}^{m \times n}$  whose rows are unit vectors:
$$ \KY{
	Ax - \varepsilon \|x\|  \begin{bmatrix}1 \\ \ldots \\ 1\end{bmatrix}  \le 
AP^{\top}Px \le Ax + \varepsilon \|x\|  \begin{bmatrix}1 \\ \ldots \\ 1\end{bmatrix} 
}$$
 with probability at least $1 - 4me^{-\mathcal{C}_0\varepsilon^2 d}$.
\item[(iii)] For any two vectors $x, y\in \mathbb{R}^n$ and a square matrix $Q \in  \mathbb{R}^{n \times n}$, then with probability at least $1 - 8ke^{-\mathcal{C}_0\varepsilon^2 d}$, we have:
$$\KY{
	x^{\top} Q y - 3\varepsilon \|x\|\, \|y\|\, \|Q\|_* \le x^{\top}P^{\top}PQP^{\top}Py \le x^{\top} Q y  + 3\varepsilon \|x\|\, \|y\|\, \|Q\|_*},  $$
in which $\|Q\|_*$ is the nuclear norm of $Q$ and $k$ is the rank of $Q$.
\end{itemize}
\end{lemma}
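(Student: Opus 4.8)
The plan is to prove the three parts in sequence, each one bootstrapping on the previous. The common thread is the polarization-type identity that turns an inner product into a combination of squared norms, so that the norm-preservation bound Eq.~\eqref{RP} can be applied to each squared-norm term separately, with a union bound over the (few) terms involved.

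For part (i), by homogeneity it suffices to treat unit vectors $x,y$; then I would use the identity $\langle x,y\rangle = \tfrac14\big(\|x+y\|^2 - \|x-y\|^2\big)$, and likewise $\langle Px,Py\rangle = \tfrac14\big(\|Px+Py\|^2 - \|Px-Py\|^2\big) = \tfrac14\big(\|P(x+y)\|^2 - \|P(x-y)\|^2\big)$. Applying Eq.~\eqref{RP} to the two vectors $x+y$ and $x-y$ (whose squared norms are at most $4$), each distortion is at most $\varepsilon\cdot 4$ in the squared norm, so after dividing by $4$ the total error in $\langle Px,Py\rangle$ is at most $\varepsilon(\|x\|^2+\|y\|^2)/2$ for unit vectors, which is $\varepsilon$; rescaling gives $\varepsilon\|x\|\,\|y\|$. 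The failure probability is at most $2\cdot 2e^{-\mathcal C\varepsilon^2 d} = 4e^{-\mathcal C\varepsilon^2 d}$ by the union bound over the two events, which fixes $\mathcal C_0 = \mathcal C$ up to a constant absorbed into the statement.

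Part (ii) is an immediate corollary of (i): the $i$-th component of $AP^\top P x$ is $\langle P a_i, P x\rangle$ where $a_i$ is the $i$-th row of $A$, and the $i$-th component of $Ax$ is $\langle a_i, x\rangle$. Since $\|a_i\| = 1$, part (i) gives $|\langle Pa_i, Px\rangle - \langle a_i, x\rangle| \le \varepsilon\|x\|$ with failure probability $4e^{-\mathcal C_0\varepsilon^2 d}$; a union bound over the $m$ rows yields the componentwise inequality with failure probability $4m e^{-\mathcal C_0\varepsilon^2 d}$.

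For part (iii), the natural route is a spectral decomposition of $Q$. Write $Q = \sum_{j=1}^{k} \sigma_j u_j v_j^\top$ via the singular value decomposition (or an eigendecomposition if one prefers, at the cost of signs), where $k = \mathrm{rank}(Q)$, the $u_j$ and $v_j$ are unit vectors, and $\sum_j \sigma_j = \|Q\|_*$. Then $x^\top Q y = \sum_j \sigma_j \langle x, u_j\rangle\langle v_j, y\rangle$ and $x^\top P^\top P Q P^\top P y = \sum_j \sigma_j \langle Px, Pu_j\rangle\langle Pv_j, Py\rangle$. For each $j$ one controls $\langle Px, Pu_j\rangle$ by part (i) (error $\varepsilon\|x\|$, since $\|u_j\|=1$) and $\langle Pv_j, Py\rangle$ similarly (error $\varepsilon\|y\|$); multiplying two approximations each within an additive $\varepsilon$ of quantities bounded by $\|x\|$ and $\|y\|$ respectively produces a product error bounded by roughly $(2\varepsilon + \varepsilon^2)\|x\|\,\|y\| \le 3\varepsilon\|x\|\,\|y\|$ once $\varepsilon<1$ — this is where the constant $3$ comes from. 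Summing against the weights $\sigma_j$ gives total error $3\varepsilon\|x\|\,\|y\|\,\|Q\|_*$. The union bound is over $2k$ invocations of part (i) (one for each of $u_j$ paired with $x$, and $v_j$ paired with $y$, for $j=1,\dots,k$), each failing with probability at most $4e^{-\mathcal C_0\varepsilon^2 d}$, giving the claimed $8k e^{-\mathcal C_0\varepsilon^2 d}$.

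The main obstacle is the bookkeeping in part (iii): one must be careful that the cross-terms from expanding $(\langle x,u_j\rangle + \text{err})(\langle v_j,y\rangle + \text{err})$ are genuinely absorbed by the factor $3$ and not larger, which forces the reduction to unit $u_j,v_j$ and the use of $\varepsilon<1$ to kill the $\varepsilon^2$ term; and one must make sure the union bound is taken over the correct, fixed set of vectors (the singular vectors of $Q$ together with $x$ and $y$) rather than over all of $\mathbb R^n$, which is what keeps the probability bound finite. Everything else is polarization plus Eq.~\eqref{RP} applied termwise.
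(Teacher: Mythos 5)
Your proposal is correct and follows essentially the same route as the paper: polarization plus Eq.~\eqref{RP} with a union bound for (i), a row-by-row application of (i) for (ii), and an SVD decomposition of $Q$ with $2k$ invocations of the inner-product bound for (iii), where the paper merely phrases the same argument in matrix form (applying (ii) to $U^\top$ and $V^\top$) rather than termwise. The constants, error bounds, and failure probabilities all match.
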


\begin{proof} \mbox{ } \\
(i) Let $\mathcal{C}_0$ be the same universal constant (denoted by $\mathcal{C}$) in Eq.~\eqref{RP}. By the property in Eq.~\eqref{RP}, for any two vectors $u+v, \, u-v$ and using the union bound, we have
 \begin{align*}
|\langle Pu, Pv \rangle  - \langle u, v \rangle| & = \frac{1}{4} \big|\|P(u+v)\|^2 - \|P(u-v)\|^2 -  \|u+v\|^2 + \|u-v\|^2 \big| \\
 & \le  \frac{1}{4} \big|\|P(u+v)\|^2  -  \|u+v\|^2 \big| + \frac{1}{4}\big|\|P(u-v)\|^2 -  \|u-v\|^2\big| \\
 & \le  \frac{\varepsilon}{4} (\|u+v\|^2 + \|u-v\|^2) = \frac{\varepsilon}{2} (\|u\|^2 + \|v\|^2),
 \end{align*}
 with probability at least $1 - 4 e^{-\mathcal{C}_0\varepsilon^2d}$.  Apply this result for $u= \frac{x}{\|x\|}$ and $v = \frac{y}{\|y\|}$, we obtain the desired inequality.

(ii)  Let $A_1,\ldots, A_m$ be (unit) row vectors of $A$. Then 
$$AP^\top Px - Ax = \begin{pmatrix} A_1^\top  P^\top Px - A_1^\top x\\ \ldots \\ A_m^\top  P^\top Px - A_m^\top  x\end{pmatrix} = 
\begin{pmatrix} \langle PA_1, Px \rangle  - \langle A_1, x \rangle \\ \ldots \\ \langle PA_m, Px \rangle  - \langle A_m, x \rangle\end{pmatrix}.$$
The claim follows by applying Part (i) and the union bound.

(iii) Let $Q = U\Sigma V^{\top}$ be the Singular Value Decomposition (SVD) of $Q$. Here $U, V$ are $(n \times k)$-real matrices with orthogonal unit column vectors $u_1, \ldots, u_k$ and $v_1,\ldots,v_k$, respectively and $\Sigma = \mbox{\sf diag} (\sigma_1,\ldots,\sigma_k)$ is a diagonal real matrix with positive entries. Denote by $\textbf{1}_k = (1,\ldots, 1)^\top$ the $k$-dimensional column vector of all $1$ entries. Since
\begin{align*}
x^{\top}P^{\top}PQP^{\top}Py & = (U^\top P^\top P x)^\top \Sigma (V^\top P^\top P y)  \\
& = \big[U^\top  x + U^\top (P^\top P - \mathbb{I}_n) x\big]^\top \Sigma \big[V^\top  y + V^\top (P^\top P - \mathbb{I}_n) y \big] 
\end{align*}
then the two inequalities that
\begin{align*}
 (U^\top x - \varepsilon \|x\| \textbf{1}_k)^\top \; \Sigma \;( V^\top y  - \varepsilon \|y\| \textbf{1}_k) \le x^{\top}P^{\top}PQP^{\top}Py \le  (U^\top x + \varepsilon \|x\| \textbf{1}_k)^\top \; \Sigma \;( V^\top y  + \varepsilon \|y\| \textbf{1}_k)
\end{align*}
occurs with probability at least $1 - 8ke^{-\mathcal{C}\varepsilon^2 d}$ (by applying part (ii) and the union bound). Moreover
\begin{align*}
 (U^\top x - \varepsilon \|x\| \textbf{1}_k)^\top \; \Sigma \;( V^\top y  - \varepsilon \|y\| \textbf{1}_k)  
  = & \; \;
  x^{\top}Qy -   \varepsilon \|x\| (\textbf{1}_k^\top \Sigma V^\top y)  -  \varepsilon \|y\|  (x^\top U \Sigma \textbf{1}_k ) + \varepsilon^2 \|x\|\, \|y\| \sum_{i=1}^k \sigma_i\\
  = & \; \;
  x^{\top}Qy -   \varepsilon(\sigma_1, \ldots, \sigma_k) \big( \|x\| V^\top y  +  \|y\| U^\top x \big)  + \varepsilon^2 \|x\|\, \|y\| \sum_{i=1}^k \sigma_i,
\end{align*}
and
\begin{align*}
 (U^\top x + \varepsilon \|x\| \textbf{1}_k)^\top \; \Sigma \;( V^\top y  + \varepsilon \|y\| \textbf{1}_k)  = & \; \;
x^{\top}Qy +   \varepsilon(\sigma_1, \ldots, \sigma_k) \big( \|x\| V^\top y  +  \|y\| U^\top x \big)  + \varepsilon^2 \|x\|\, \|y\| \sum_{i=1}^k \sigma_i. 
\end{align*}
Therefore,
\begin{align*}
 | x^{\top}P^{\top}PQP^{\top}Py - x^{\top}Qy|  
\le &\;\; 
 \|x\|\,\|y\| \left(2 \varepsilon \sqrt{\sum_{i=1}^k \sigma^2_i} + \varepsilon^2 \sum_{i=1}^k \sigma_i\right) \\ 
\le &\;\; 
3 \varepsilon \|x\|\,\|y\|  \sum_{i=1}^k \sigma_i =  3 \varepsilon \|x\|\,\|y\|\,  \|Q\|_*
\end{align*}
with probability at least $1 - 8ke^{-\mathcal{C}\varepsilon^2 d}$. 
\end{proof}

It is known that singular values of random matrices often concentrate around their expectations. In the case when the random matrix is sampled from Gaussian ensembles, this phenomenon is well-understood due to many current research efforts. The following lemma, which is proved in \cite{Zhang2013}, uses this phenomenon to show that when $P \in \mathbb{R}^{d \times n}$ is a Gaussian random matrix (with the number of row significantly smaller than the number of columns), then $PP^{\top}$ is very close to the identity matrix. 
\begin{lemma}[\cite{Zhang2013}] \label{Zhang}
Let $P \in \mathbb{R}^{d \times n}$ be a random matrix in which each entry is an i.i.d $\mathcal{N}(0, \frac{1}{\sqrt{n}})$ random variable. Then for any $\delta > 0$ and $0 < \varepsilon < \frac{1}{2}$, with probability at least $1 - \delta$, we have:
$$\|PP^{\top} - I\|_2 \le \varepsilon$$
provided that
\begin{equation} \label{condition1}
n \ge \frac{(d+1) \log (\frac{2d}{\delta})} {\mathcal{C}_1 \varepsilon^2},
\end{equation}
where $\|\;.\, \|_2$ is the spectral norm of the matrix and $\mathcal{C}_1 > \frac{1}{4}$ is some universal constant. 
\end{lemma}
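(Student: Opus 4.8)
\emph{Proof plan.}\quad Since the lemma is quoted from the literature, I would reconstruct its proof as a standard extreme-eigenvalue estimate for a Gaussian matrix. The starting point is that $M:=PP^{\top}-I$ is symmetric, so
\[
\|M\|_2 \;=\; \sup_{v\in S^{d-1}}\big|v^{\top}PP^{\top}v-1\big| \;=\; \sup_{v\in S^{d-1}}\big|\,\|P^{\top}v\|^2-1\,\big|,
\]
where $S^{d-1}\subset\mathbb{R}^d$ is the unit sphere. Reading the hypothesis as ``each entry is $\mathcal{N}(0,1/n)$'' (standard deviation $1/\sqrt n$), for a \emph{fixed} unit vector $v$ the coordinates of $P^{\top}v\in\mathbb{R}^n$ are i.i.d.\ $\mathcal{N}(0,1/n)$, whence $n\,\|P^{\top}v\|^2\sim\chi^2_n$ and $\mathbb{E}\,\|P^{\top}v\|^2=1$.

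The next step is a sharp chi-square tail bound. From the elementary inequalities $\varepsilon-\log(1+\varepsilon)\ge\varepsilon^2/(2(1+\varepsilon))$ and $-\varepsilon-\log(1-\varepsilon)\ge\varepsilon^2/2$ on $0<\varepsilon<\tfrac12$, the standard Chernoff bound for $\chi^2_n$ gives, for each fixed unit $v$, the estimate
\[
\mathbb{P}\Big[\,\big|\,\|P^{\top}v\|^2-1\,\big| > \varepsilon\,\Big] \;\le\; 2\,e^{-\,n\varepsilon^2/(4(1+\varepsilon))},
\]
whose exponent is $\approx n\varepsilon^2/4$ for small $\varepsilon$ — this is the source of the ``$\mathcal{C}_1>\tfrac14$'' in the statement. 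I would then discretize: fix an $\eta$-net $\Lambda$ of $S^{d-1}$ with $|\Lambda|\le(1+2/\eta)^d$, union-bound the previous estimate (run at slightly reduced accuracy) over $\Lambda$, and invoke the classical fact that for symmetric $M$ and an $\eta$-net one has $\|M\|_2\le(1-2\eta)^{-1}\sup_{v\in\Lambda}|v^{\top}Mv|$. Choosing $\eta$ small, the failure probability is at most $2\,|\Lambda|\,e^{-c\,n\varepsilon^2}$, which is $\le\delta$ once $c\,n\varepsilon^2\ge\log|\Lambda|+\log(2/\delta)$; bounding $\log|\Lambda|+\log(2/\delta)$ by a multiple of $(d+1)\log(2d/\delta)$ produces a sufficient condition on $n$ of the shape in \eqref{condition1}, and a careful choice of $\eta=\eta(d)$ keeps the leading constant close to $\tfrac14$.

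A variant avoids the net entirely: apply the Davidson--Szarek concentration for the extreme singular values of a Gaussian matrix to $\sqrt n\,P$, which gives $1-\sqrt{d/n}-t\le\sigma_{\min}(P)\le\sigma_{\max}(P)\le 1+\sqrt{d/n}+t$ with probability at least $1-2e^{-nt^2/2}$, and then combine this with $\|PP^{\top}-I\|_2=\max\big(\sigma_{\max}(P)^2-1,\;1-\sigma_{\min}(P)^2\big)$ and $\varepsilon<\tfrac12$ to turn a bound of size $\varepsilon/3$ on $\sqrt{d/n}+t$ into the claim. In either route the only genuine difficulty is quantitative: one must keep the chi-square / Gaussian tail exponent near its optimal value and ensure the net cardinality (equivalently the $\sqrt{d/n}$ term) is absorbed so that the requirement on $n$ stays \emph{linear} in $d$ — up to the $\log d$ slack hidden in $\log(2d/\delta)$ — whereas a lazier entrywise or Gershgorin bound on $PP^{\top}-I$ would lose an extra factor $d$ and miss the stated condition.
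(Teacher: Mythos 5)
The paper offers no proof of this lemma: it is imported verbatim from \cite{Zhang2013}, so there is no internal argument to compare yours against, and your task is really to certify that the quoted statement is provable. Your reconstruction is the standard route and is sound: reduce $\|PP^{\top}-I\|_2$ to $\sup_{v\in S^{d-1}}\big|\|P^{\top}v\|^2-1\big|$ by symmetry, observe that for fixed unit $v$ the coordinates of $P^{\top}v$ are i.i.d.\ $\mathcal{N}(0,1/n)$ so that $n\|P^{\top}v\|^2\sim\chi^2_n$ (your reading of the variance as $1/n$ is the only one under which $\mathbb{E}[PP^{\top}]=I$ and the lemma can hold, so flagging it is appropriate), apply the Chernoff tail for $\chi^2_n$, and pass to the supremum either by an $\eta$-net with the $(1-2\eta)^{-1}$ comparison lemma for symmetric matrices or by Davidson--Szarek applied to $\sqrt{n}\,P$. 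Both routes produce a sufficient condition of the shape \eqref{condition1}; indeed the right-hand side of \eqref{condition1} is generous, since $(d+1)\log(2d/\delta)$ dominates the $d\log(1+2/\eta)+\log(2/\delta)$ that the net actually requires. The one claim I would not certify from your sketch alone is the precise assertion $\mathcal{C}_1>\tfrac14$: the net forces you to run the single-vector bound at accuracy $(1-2\eta)\varepsilon$, and the exponent $n\varepsilon^2/(4(1+\varepsilon))$ already sits below $n\varepsilon^2/4$, so whether the constant survives above $\tfrac14$ after these losses depends on the bookkeeping you defer (and on the slack provided by the $\log(2d)$ factor, which is thin for small $d$). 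Since the paper itself only invokes ``some universal constant'' and never uses the numerical value $\tfrac14$ downstream, this is a cosmetic rather than substantive gap; everything structural in your plan is correct.
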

This lemma also tells us that when we go from low to high dimensions, with high probability we can ensure that the norms of all the points endure small distortions. Indeed, for any vector $u \in \mathbb{R}^d$, then 
$$\|P^\top u\|^2 - \|u\|^2 = \langle P^\top u,P^\top u\rangle -\langle u,u\rangle = \langle (PP^\top - I) u, u\rangle  \in \mbox{range}(- \varepsilon \|u\|^2, \varepsilon \|u\|^2), $$
due to the Cauchy-Schwarz inequality. Moreover, it implies that $\|P\|_2 \le (1+\eps)$ with probability at least $1 - \delta$.

Condition \eqref{condition1} is not difficult to satisfy, since $d$ is often very small compared to $n$. It suffices that $n$ should be large enough to dominate the effect of $\frac{1}{\varepsilon^2}$.

\subsection{Trust region subproblems with linear models}
We will first work with a simple case, i.e.~when the surrogate models used in TR methods are linear: 
\begin{eqnarray}  \label{TR:main-prob-linear}
 \min \, \{c^{\top} x \; | \;  A x \le  b, \; \|x\| \le 1, x \in \mathbb{R}^n \}.
\end{eqnarray}
We will establish a relationship between problem \eqref{TR:main-prob-linear} and its corresponding projected problem:
\begin{align}   \tag{$P^{-}_{\varepsilon}$}
 \min \, \{(Pc)^{\top} u \; | \;  AP^{\top} u \le  b, \; \|u\| \le 1-\varepsilon, u \in \mathbb{R}^d \}.
\end{align}

Note that, in the above problem, we shrink the unit ball by $\varepsilon$. We first obtain the following feasibility result:
\begin{theorem}\label{thm:feasibility1}
Let $P \in \mathbb{R}^{d \times n}$ be a random matrix in which each entry is an i.i.d $\mathcal{N}(0, \frac{1}{\sqrt{n}})$ random variable. Let $\delta \in (0,1)$. Assume further that 
$$n \ge \frac{(d+1) \log (\frac{2d}{\delta})} {\mathcal{C}_1 \varepsilon^2},$$
for some universal constant $\mathcal{C}_1 > \frac{1}{4}$. Then with probability at least $1- \delta$, for any feasible solution $u$ of the projected problem  ($P^{-}_{\varepsilon}$), $P^\top u$ is also feasible for the original problem (\ref{TR:main-prob-linear}). 
\end{theorem}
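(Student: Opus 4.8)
The plan is to notice that the linear-inequality part of feasibility is automatic, so that only the ball constraint requires argument, and that part is handled directly by Lemma~\ref{Zhang}.

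First I would invoke Lemma~\ref{Zhang}: under the stated hypothesis on $n$, with probability at least $1-\delta$ the event $\mathcal{E}=\{\|PP^{\top}-I\|_2\le\varepsilon\}$ holds. On $\mathcal{E}$, every eigenvalue of the symmetric positive semidefinite matrix $PP^{\top}$ lies in $[1-\varepsilon,1+\varepsilon]$, hence $\|P\|_2^2=\|PP^{\top}\|_2\le 1+\varepsilon$, i.e.\ $\|P^{\top}\|_2=\|P\|_2\le\sqrt{1+\varepsilon}$ (this is exactly the remark made after Lemma~\ref{Zhang}, which also gives the cruder bound $\|P\|_2\le 1+\varepsilon$).

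Next, fix any $u$ feasible for $(P^{-}_{\varepsilon})$, so that $AP^{\top}u\le b$ and $\|u\|\le 1-\varepsilon$, and set $x=P^{\top}u$. The linear constraints of \eqref{TR:main-prob-linear} evaluated at $x$ read $Ax=(AP^{\top})u\le b$, which holds by assumption, so there is nothing to check there. For the ball constraint, on $\mathcal{E}$ I would bound $\|x\|=\|P^{\top}u\|\le\|P^{\top}\|_2\,\|u\|\le\sqrt{1+\varepsilon}\,(1-\varepsilon)$.

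It then remains only to verify the elementary inequality $\sqrt{1+\varepsilon}\,(1-\varepsilon)\le 1$ for $\varepsilon\in(0,1)$, equivalently $(1+\varepsilon)(1-\varepsilon)^2\le 1$; expanding gives $1-\varepsilon-\varepsilon^2+\varepsilon^3=1-\varepsilon(1+\varepsilon-\varepsilon^2)$, and since $1+\varepsilon-\varepsilon^2>0$ on $(0,1)$ the correction term is strictly positive, so the quantity is $<1$. Hence $\|x\|\le 1$ and $x=P^{\top}u$ is feasible for \eqref{TR:main-prob-linear}; the whole argument is valid on the probability-$\ge 1-\delta$ event $\mathcal{E}$, which proves the claim. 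There is essentially no hard step here: the only point to be careful about is that the deliberate shrinkage of the ball radius by exactly $\varepsilon$ in $(P^{-}_{\varepsilon})$ is precisely what absorbs the $\sqrt{1+\varepsilon}$ norm-distortion factor, and one should check that whichever of the two bounds $\|P\|_2\le\sqrt{1+\varepsilon}$ or $\|P\|_2\le 1+\varepsilon$ is used, the final arithmetic still closes — with $1+\varepsilon$ one gets $\|x\|\le 1-\varepsilon^2\le 1$ just as well.
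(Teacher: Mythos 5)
Your proof is correct and follows essentially the same route as the paper: both reduce the problem to the ball constraint (the linear constraints being automatic) and use Lemma~\ref{Zhang} to bound $\|P^{\top}u\|$ by $(1+O(\varepsilon))\|u\|$, which the $\varepsilon$-shrinkage of the ball absorbs. The only cosmetic difference is that you phrase the distortion as an operator-norm bound $\|P^{\top}\|_2\le\sqrt{1+\varepsilon}$ while the paper bounds $\|P^{\top}u\|^2=u^{\top}PP^{\top}u\le(1+\varepsilon)\|u\|^2$ directly and then uses $\sqrt{1+\varepsilon}\le 1+\varepsilon/2$; the arguments are equivalent.
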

We remark the following universal property of Theorem \ref{thm:feasibility1}: with a fixed probability, feasibility holds for all (instead of a specific) vectors $u$.

\begin{proof}
Let $\mathcal{C}_1$ be as in Lemma \ref{Zhang}. Let $u$ be any feasible solution for the projected problem  ($P^{-}_{\varepsilon}$) and take $\hat{x} = P^\top u$. Then we have $A\hat{x} = AP^\top u \le b$ and
$$\|\hat{x}\|^2 = \| P^\top u \|^2 = u^\top P^\top P u = u^\top u + u^\top (P^\top P - I) u \le (1+\varepsilon) \|u\|^2,$$
with probability at least $1 - \delta$ (by Lemma \ref{Zhang}). This implies that $\|\hat{x}\| \le  (1+\varepsilon/2) \|u\|$; and since $\|u\| \le 1 - \eps$, we have
$$\|\hat{x}\| \le  (1+\frac{\varepsilon}{2})  (1-\varepsilon) < 1,$$
with probability at least $1 - \delta$, which proves the theorem.
\end{proof}

In order to estimate the quality of the objective function value, we define another projected problem, which can be considered as a relaxation of $(P^{-}_{\varepsilon})$ (we enlarge the feasible set by $\varepsilon$):
\begin{align}   \tag{$P^{+}_{\varepsilon}$}
 \min \, \{(Pc)^{\top} u \; | \;  AP^{\top} u \le  b + \varepsilon, \; \|u\| \le 1 + \varepsilon, u \in \mathbb{R}^d \}.
\end{align}

Intuitively, these two projected problems $(P^{-}_{\varepsilon})$ and $(P^{+}_{\varepsilon})$ are very close to each other when $\varepsilon$ is small enough (under some additional assumptions, such as the ``fullness" of the original polyhedron). Moreover, to make our discussions meaningful, we need to assume that they are feasible (in fact, it is enough to assume the feasibility of $(P^{-}_{\varepsilon})$ only). 

Let $u_{\varepsilon}^{-}$ and $u_{\varepsilon}^{+}$ be optimal solutions for these two problems, respectively. Denote by $x_{\varepsilon}^{-} = P^\top u_\varepsilon^{-}$ and $x_{\varepsilon}^{+} = P^\top u_\varepsilon^{+}$.  
Let $x^*$ be an optimal solution for the original problem (\ref{TR:main-prob}). We will bound $c^\top x^*$ between $c^\top x_{\varepsilon}^{-}$ and  $c^\top x_{\varepsilon}^{+}$, the two values that are expected to be approximately close to each other. 

\begin{theorem}
Let $P \in \mathbb{R}^{d \times n}$ be a random matrix in which each entry is an i.i.d $\mathcal{N}(0, \frac{1}{\sqrt{n}})$ random variable. Let $\delta \in (0,1)$. Then there are universal constants  $\mathcal{C}_0 > 1$ and $\mathcal{C}_1 > \frac{1}{4}$ such that if the two following conditions 
\[
d \ge \frac{\log (m/\delta)}{\mathcal{C}_0 \eps^2},
\quad \mbox{and } \quad n \ge \frac{(d+1) \log (\frac{2d}{\delta})} {\mathcal{C}_1 \varepsilon^2}, 
\]
are satisfied, we will have:\\
(i) With probability at least $1 - \delta $,  the solution $x_{\varepsilon}^{-}$  is feasible for the original problem  (\ref{TR:main-prob}). \\
 (ii)   With probability at least $1 - \delta$, we have:
 $$ c^{\top}x_{\varepsilon}^{-} \ge c^{\top} x^* \ge c^{\top}x_{\varepsilon}^{+} - \varepsilon \|c\|. $$
\end{theorem}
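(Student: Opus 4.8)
The plan is to read off part (i) directly from Theorem~\ref{thm:feasibility1}, and to prove the two-sided estimate in (ii) by sandwiching $c^\top x^*$ between the two projected optimal values: the upper bound $c^\top x_\varepsilon^- \ge c^\top x^*$ will again come from Theorem~\ref{thm:feasibility1}, while the lower bound $c^\top x^* \ge c^\top x_\varepsilon^+ - \varepsilon\|c\|$ will come from exhibiting $Px^*$ as a feasible point of $(P^+_\varepsilon)$ whose objective value is close to $c^\top x^*$.

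First I would dispatch (i) together with the first inequality of (ii). Since $u_\varepsilon^-$ is in particular feasible for $(P^-_\varepsilon)$, Theorem~\ref{thm:feasibility1} (which needs only the hypothesis on $n$) gives that, with probability at least $1-\delta$, the lift $x_\varepsilon^- = P^\top u_\varepsilon^-$ is feasible for the original problem~\eqref{TR:main-prob-linear}; this is exactly statement (i), and on the same event the optimality of $x^*$ forces $c^\top x_\varepsilon^- \ge c^\top x^*$.

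For the remaining, substantive inequality, observe that $c^\top x_\varepsilon^+ = c^\top P^\top u_\varepsilon^+ = (Pc)^\top u_\varepsilon^+$ is the optimal value of $(P^+_\varepsilon)$, hence is bounded above by $(Pc)^\top \bar u$ for any $\bar u$ feasible for $(P^+_\varepsilon)$; I would take $\bar u = Px^*$. To check feasibility of this $\bar u$: the ball constraint holds because $\|Px^*\| \le \|P\|_2\,\|x^*\| \le 1+\varepsilon$, valid on the event of Lemma~\ref{Zhang} (the same event as above, reusing $\|P\|_2 \le 1+\varepsilon$ and $\|x^*\|\le 1$); and the linear constraints hold because Lemma~\ref{jll-approx}(ii) applied to $A$ and $x^*$ gives $AP^\top(Px^*) \le Ax^* + \varepsilon\|x^*\|\,\mathbf{1} \le b + \varepsilon\,\mathbf{1}$ with probability at least $1-4me^{-\mathcal{C}_0\varepsilon^2 d}$ (this incidentally also certifies that $(P^+_\varepsilon)$ is feasible whenever \eqref{TR:main-prob-linear} is, so that $u_\varepsilon^+$ exists). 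For the objective, Lemma~\ref{jll-approx}(i) applied to $c$ and $x^*$ gives $(Pc)^\top(Px^*) = \langle Pc, Px^*\rangle \le \langle c, x^*\rangle + \varepsilon\|c\|\,\|x^*\| \le c^\top x^* + \varepsilon\|c\|$ with probability at least $1-4e^{-\mathcal{C}_0\varepsilon^2 d}$. Chaining the two, $c^\top x_\varepsilon^+ \le (Pc)^\top(Px^*) \le c^\top x^* + \varepsilon\|c\|$, which is the claim.

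Finally I would collect probabilities: a union bound over the event of Lemma~\ref{Zhang}, the event of Lemma~\ref{jll-approx}(ii) for the $m$ rows of $A$, and the event of Lemma~\ref{jll-approx}(i) for $(c,x^*)$ leaves a failure probability of at most $\delta + (4m+4)e^{-\mathcal{C}_0\varepsilon^2 d}$, which is below $\delta$ once $d \ge \log(m/\delta)/(\mathcal{C}_0\varepsilon^2)$ and $n \ge (d+1)\log(2d/\delta)/(\mathcal{C}_1\varepsilon^2)$, after absorbing the numerical constants into the universal constants $\mathcal{C}_0,\mathcal{C}_1$ — in particular the $\log m$ in the condition on $d$ is exactly the price of the union bound over the $m$ constraints. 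I expect the only delicate point to be the feasibility of $\bar u = Px^*$ for $(P^+_\varepsilon)$: one needs the $\varepsilon$-enlargements of the ball ($1\mapsto 1+\varepsilon$) and of the right-hand side ($b\mapsto b+\varepsilon$) to absorb, simultaneously over all $m$ rows, the Johnson--Lindenstrauss distortion supplied by Lemma~\ref{jll-approx}, and this is also where one tacitly uses (as in Lemma~\ref{jll-approx}(ii)) that the rows of $A$ have been normalized. Everything else is bookkeeping.
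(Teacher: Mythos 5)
Your proposal is correct and follows essentially the same route as the paper: part (i) and the first inequality come from Theorem~\ref{thm:feasibility1} plus optimality of $x^*$, and the second inequality comes from exhibiting $\hat u = Px^*$ as a feasible point of $(P^+_\varepsilon)$ (linear constraints via Lemma~\ref{jll-approx}(ii), ball constraint via the norm-distortion bound) and comparing $\langle Pc, Px^*\rangle$ to $\langle c, x^*\rangle$ via Lemma~\ref{jll-approx}(i). The only cosmetic differences are that the paper certifies $\|Px^*\|\le(1+\varepsilon)$ directly from Eq.~\eqref{RP} rather than from $\|P\|_2\le 1+\varepsilon$, and its failure-probability count is $(4m+6)e^{-\mathcal{C}_0\varepsilon^2 d}$ rather than your $(4m+4)e^{-\mathcal{C}_0\varepsilon^2 d}$ — both absorbed into the universal constant in the same way.
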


\begin{proof}
Select $\mathcal{C}_0$ and $\mathcal{C}_1$ as in Lemma \ref{jll-approx} and Lemma \ref{Zhang}. Note that the second condition is the requirement for the Lemma \ref{Zhang} to hold, and the first condition is equivalent to 
$ m \,e ^{-\mathcal{C}_0 \varepsilon^2 d} \le \delta$.  Therefore, we can choose the universal constant $\mathcal{C}_0$
such that $1 - (4m+6) \,e^{-\mathcal{C}_0\varepsilon^2 d} \ge 1 - \delta$.\\
(i) From the previous theorem, with probability at least $1 - \delta $, for any feasible point  $u$ of the projected problem  ($P^{-}_{\varepsilon}$), $P^\top u$ is also feasible for the original problem (\ref{TR:main-prob-linear}). Therefore, it must hold also for $x_{\varepsilon}^{-}$.

(ii) From part (i), with probability at least $1- \delta$, $x_{\varepsilon}^{-}$  is feasible for the original problem  (\ref{TR:main-prob}). Therefore, we have
\begin{align*}
c^{\top}x_{\varepsilon}^{-}  \ge c^{\top} x^* ,
\end{align*}
with probability at least $1- \delta$.

Moreover, due to Lemma \ref{jll-approx}, with probability at least $1 - 4e^{-\mathcal{C}_0\varepsilon^2 d}$, we have
\begin{align*}
 c^{\top} x^* \ge c^{\top} P^{\top} P x^* - \varepsilon \|c\| \,\|x^*\| \ge c^{\top} P^{\top} P x^* - \varepsilon \|c\|,
\end{align*}
(the last inequality follows from $\|x^*\| \le 1$).  On the other hand, let $\hat{u}= Px^*$, due to Lemma \ref{jll-approx}, we have
$$ AP^\top \hat{u} = AP^\top P x^*  \le  Ax^*+  \varepsilon \|x^*\|  \begin{bmatrix}1 \\ \cdots \\ 1\end{bmatrix} \le Ax^* +  \varepsilon \begin{bmatrix}1 \\ \cdots \\ 1\end{bmatrix} \le b + \eps,$$
 with probability at least $1 - 4me^{-\mathcal{C}_0\varepsilon^2 d}$, and
 $$\|\hat{u}\| = \|Px^*\| \le (1+ \eps) \|x^*\| \le (1+ \eps),$$
with probability at least $1 - 2e^{-\mathcal{C}_0\varepsilon^2 d}$, by Property \eqref{RP}.
Therefore, $\hat{u}$ is a feasible solution for the problem ($P^{+}_{\varepsilon}$) with probability at least $1 - (4m+2)e^{-\mathcal{C}_0\varepsilon^2 d}$. 

Since $u^+_\eps$ is the optimal solution for the problem ($P^{+}_{\varepsilon}$), it follows that
\begin{align*}
 c^{\top} x^* \ge  c^{\top} P^{\top} P x^* - \varepsilon \|c\| = c^{\top} P^{\top} \hat{u} - \varepsilon \|c\| \ge c^{\top} P^{\top} u_{\varepsilon}^{+} - \varepsilon \|c\| = c^{\top} x_{\varepsilon}^{+} - \varepsilon \|c\|,
\end{align*}
with probability at least $1 - (4m+6)e^{-\mathcal{C}_0\varepsilon^2 d}$, which is at least $1- \delta$ for our chosen universal constant $\mathcal{C}_0$.
\end{proof}

We have showed that $c^\top x^*$ is bounded between  $c^{\top}x_{\varepsilon}^{-}$ and $c^{\top}x_{\varepsilon}^{+} $. Now we will estimate the gap between these two bounds.  First of all, as stated at the outset, we assume that the feasible set 
$$S^* = \{ x \in \mathbb{R}^n \; | \; A x \le  b, \; \|x\| \le 1\}$$
is full-dimensional. This is a natural assumption since otherwise the polyhedron  $\{ x \in \mathbb{R}^n \; | \; A x \le  b\}$ is almost surely not full-dimensional either.

We associate with each set $S$ a positive number $\mbox{\sc full}(S) > 0$, which is considered as a fullness measure of $S$ and is defined as the maximum radius of any closed ball contained in $S$. Now, from the our assumption, we have $\mbox{\sc full} (S^*) = r^* > 0$, where $r^\ast$ is the radius of the greatest ball inscribed in $S^\ast$ (see Fig.~\ref{f:fullness}). 

\begin{figure}[h]
	\centering
	\includegraphics[width=10cm]{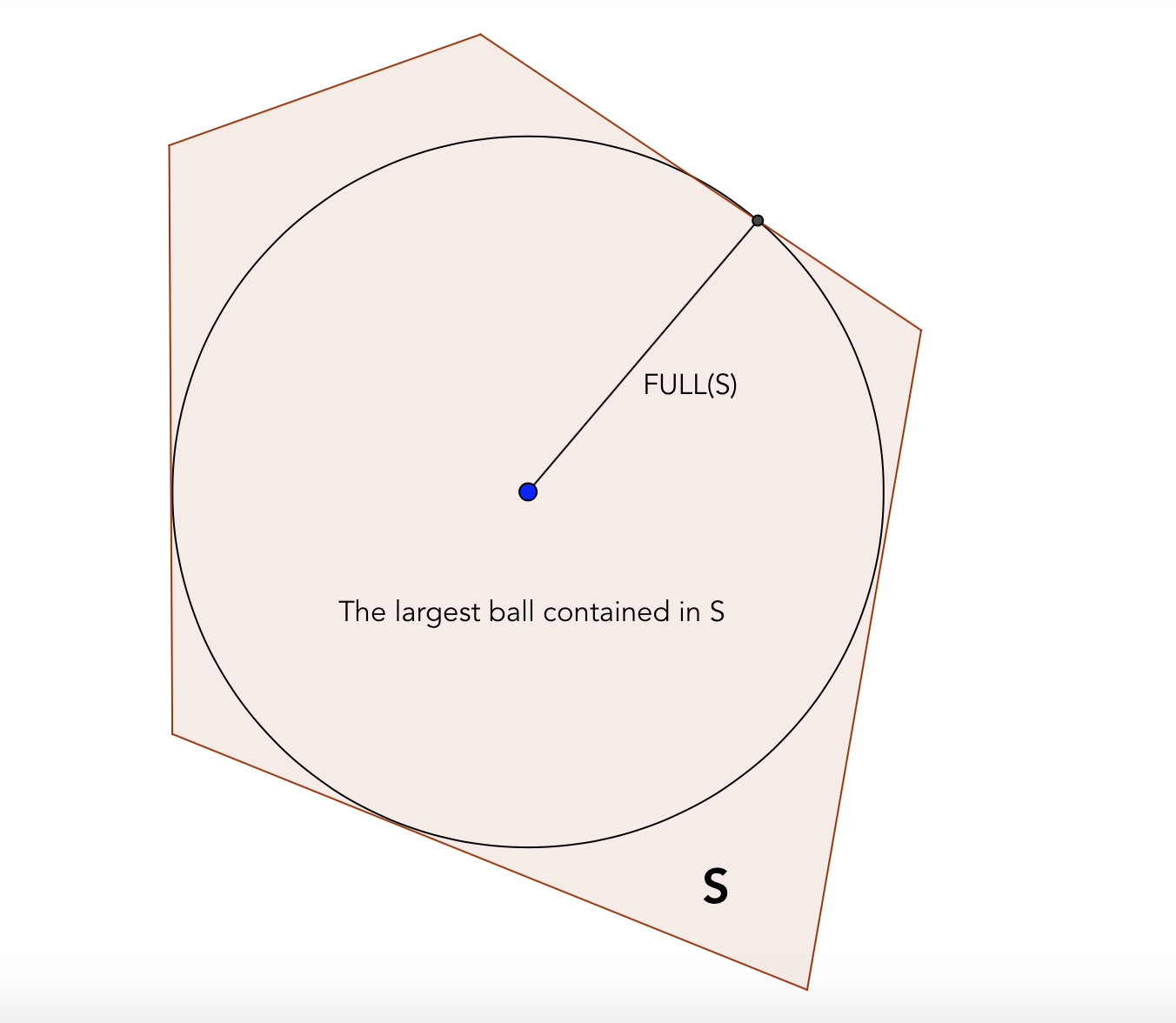}
	\caption{Fullness of a set.}
        \label{f:fullness}
\end{figure}

The following lemma characterizes the fullness of $S^+_\eps$ with respect to $r^*$, where
$$S_{\varepsilon}^{+}= \{ u \in \mathbb{R}^d \; | \; AP^{\top} u \le  b + \varepsilon, \; \|u\| \le 1 + \varepsilon\},$$
that is, the feasible set of the problem ($P_{\varepsilon}^{+}$).  
\begin{lemma}\label{fullness-lem}
	Let $S^*$ be full-dimensional with $\mbox{\sc full} (S^*)= r^*$. Then  
	with probability at least $1 - 3\delta$, $S^+_\eps$ is also full-dimensional with the fullness measure:
	$${\mbox{\sc full} }(S^+_\eps) \ge (1 - \eps) r^*.$$
\end{lemma}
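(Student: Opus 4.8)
The plan is to exhibit an explicit ball of radius $(1-\eps)r^*$ inside $S^+_\eps$. Let $B(x_0,r^*)\subseteq S^*$ be an inscribed ball realizing $\mbox{\sc full}(S^*)=r^*$ (as in the rest of the paper I take the rows $A_1,\dots,A_m$ of $A$ to be unit vectors; otherwise rescale each inequality, which does not change $S^*$). Since the whole ball lies in $S^*$, taking the support of $B(x_0,r^*)$ in each constraint direction yields the ``slack'' inequalities $\|x_0\|\le 1-r^*$ and $Ax_0\le b-r^*\mathbf 1$, where $\mathbf 1=(1,\dots,1)^\top$; in particular $r^*\le 1$. The natural candidate is then $B(Px_0,(1-\eps)r^*)\subseteq\mathbb{R}^d$, and the proof reduces to checking that every $u=Px_0+v$ with $\|v\|\le(1-\eps)r^*$ satisfies the two defining constraints of $S^+_\eps$.

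I would condition on two events. First, the event $E_1$ of Lemma \ref{Zhang}, namely $\|PP^\top-I\|_2\le\eps$, which holds with probability at least $1-\delta$ under the hypothesis on $n$; it gives $\|P\|_2\le\sqrt{1+\eps}\le 1+\eps/2$, hence $\|Px_0\|\le(1+\eps/2)\|x_0\|$ and $\|PA_i\|\le 1+\eps/2$ for every row. Second, the event $E_2$ that Lemma \ref{jll-approx}(ii) holds \emph{at the single vector $x_0$}, i.e.\ $AP^\top Px_0\le Ax_0+\eps\|x_0\|\mathbf 1$, which holds with probability at least $1-4m\,e^{-\mathcal{C}_0\eps^2 d}$; under $d\ge\log(m/\delta)/(\mathcal{C}_0\eps^2)$ and a union bound with $E_1$ this is at least $1-3\delta$, after absorbing the harmless constant factors into the standing constants exactly as in the proof of the preceding theorem.

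On $E_1\cap E_2$ the verification is a short computation. For the norm constraint, $\|u\|\le\|Px_0\|+(1-\eps)r^*\le(1+\eps/2)(1-r^*)+(1-\eps)r^*\le 1+\eps/2\le 1+\eps$. For the $i$-th linear constraint, split $A_i^\top P^\top u=A_i^\top P^\top Px_0+\langle PA_i,v\rangle$; bound the first term by $b_i-r^*+\eps\|x_0\|\le b_i-r^*+\eps$ using $E_2$ and the slack inequality $A_ix_0\le b_i-r^*$, and the second by $\|PA_i\|\,\|v\|\le(1+\eps/2)(1-\eps)r^*\le r^*$ using $E_1$; adding these gives $A_i^\top P^\top u\le b_i+\eps$. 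Hence $B(Px_0,(1-\eps)r^*)\subseteq S^+_\eps$, so $S^+_\eps$ is full-dimensional with $\mbox{\sc full}(S^+_\eps)\ge(1-\eps)r^*$ on an event of probability at least $1-3\delta$.

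The only real obstacle is choosing the center and radius so that the two independent error sources — the projection distortion of the center ($\eps\|x_0\|$, controlled at one point by Lemma \ref{jll-approx}(ii)) and the distortion of the ball radius under $P^\top$ (a factor $\|P\|_2\le 1+\eps/2$, controlled by Lemma \ref{Zhang}) — together consume no more than the slack $r^*$ built into the original constraints; the elementary inequality $(1+\eps/2)(1-\eps)\le 1$ is exactly what makes the shrunken radius $(1-\eps)r^*$ go through. Everything else is a union bound of the same flavour as the earlier proofs.
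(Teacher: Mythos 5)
Your proof is correct and follows essentially the same route as the paper's: the same candidate ball $B(Px_0,(1-\eps)r^*)$, the same slack inequalities $Ax_0\le b-r^*\mathbf{1}$ and $\|x_0\|\le 1-r^*$, and the same split of each constraint into the projected-center term (controlled by Lemma \ref{jll-approx}(ii)) plus a radius term. The only cosmetic difference is that you bound all the $\|PA_i\|$ at once via the spectral norm estimate from Lemma \ref{Zhang}, where the paper applies Eq.~\eqref{RP} row by row with a union bound; both give the factor needed for $(1+O(\eps))(1-\eps)r^*\le r^*$, and your probability accounting matches the paper's $1-3\delta$.
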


In the proof of this lemma, we will extensively use the fact that, for any row vector $a \in \mathbb{R}^n$
$$\sup_{\|u\| \le r} a^\top u = r\|a\|,$$
which is actually the equality condition in the  Cauchy-Schwartz inequality. 

\begin{proof}
For any $i \in \{1,\ldots,n\},$ let $A_i$ denotes the $i$th row of $A$. Let $B(x_0, r^*)$ be a closed ball contained in $S^*$. Then for any $x \in \mathbb{R}^n$ with $\|x\| \le r^*$, we have $A(x_0 + x) = Ax_0 + Ax \le b,$ which implies that for any $i \in \{1,\ldots,n\}$,
\begin{equation}\label{eq:1}
b_i \ge (Ax_0)_i + \sup_{\|x\| \le r^*} A_ix = (Ax_0)_i + r^* \|A_i\|= (Ax_0)_i + r^*,
\end{equation}
hence $$b \ge Ax_0 + r^* \quad \mbox{or equivalently, } \quad Ax_0 \le b - r^*  . $$

By Lemma \ref{jll-approx}, with probability at least $1 - \delta$, we have
$$AP^\top P x_0 \le Ax_0 + \eps \le b -  r^* + \eps .$$
Let $u \in \mathbb{R}^n$ with $\|u\| \le (1-\eps)r^*$, then for any $i \in \{1,\ldots,n\}$, by the above inequality we have:
\begin{align*}
(AP^\top(Px_0+u))_i & = (AP^\top Px_0)_i + (AP^\top u)_i \\ & \le b_i+\eps -r^* + (AP^\top)_iu \\ 
&= b_i+\eps -r^* +A_iP^\top u \end{align*}
where $(AP^\top)_i$ denotes the $i$th row of $AP^\top$.
Since it holds for all such vector $u$, hence, by Cauchy-Schwartz inequality,
$$(AP^\top(Px_0+u))_i\le  b_i+\eps -r^* + (1-\eps)r^*\|A_iP^\top\| .$$
Using Eq. \eqref{RP} and the union bound, we can see that with probability at least $1-2me^{-\mathcal{C}\varepsilon^2 d}\ge 1-\delta$, we have: for all $i \in \{1,\ldots,m\}$, $\|A_iP^\top\|\le (1+\eps)\|A_i\|=(1+\eps)$. Hence
$$ AP^\top(Px_0+u)\le  b_i+\eps -r^* + (1-\eps)r^*(1+\eps) \le b+\eps $$
with probability at least $1-2\delta$. 

In other words, with probability at least $1-2\delta$, the closed ball $B^*$ centered at $Px_0$ with radius $(1-\eps)r^*$ is contained in $\{u\;|\; AP^\top u \le b + \eps\}$. 

Moreover, since $B(x_0, r^*)$ is contained in $S^*$, which is the subset of the unit ball, then $\|x_0\| \le 1 - r^*$.

With probability at least $1 - \delta$, for all vectors $u$ in $B(Px_0, (1-\eps)r^*)$, we have
$$\|u\|\le \|Px_0\| +  (1-\eps)r^* \le (1+\eps) \|x_0\| +  (1-\eps)r^* \le (1 + \eps)(1-r^*) + (1-\eps)r^* \le 1 + \eps.$$
Therefore, by the definition of $S^+_\eps$ we have 
$$B\big(Px_0, (1-\eps)r^*\big) \subseteq S^+_\eps,$$ which implies that the fullness of $S^+_\eps$ is at least $(1 - \eps) r^*$, with probability at least $1 - 3\delta$.
\end{proof}

Now we will estimate the gap between the two objective functions of the problems ($P^+_\varepsilon$) and ($P^-_\varepsilon$) using the fullness measure. The theorem states that as long as the fullness of the original polyhedron is large enough, the gap between them is small.
\begin{theorem}\label{thm:sandwich1}
	With probability at least $1 - 2\delta$, we have
	$$c^\top x^+_\eps \le c^{\top} x^-_\eps \le c^{\top}  x_{\varepsilon}^{+} +  \frac{18\eps}{\mbox{\sc full} (S^*) }\|c\|.$$ 
\end{theorem}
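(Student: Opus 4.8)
The first inequality is immediate and requires no randomness: every $u$ feasible for $(P^-_\eps)$ is feasible for $(P^+_\eps)$, since $\|u\|\le 1-\eps$ gives $\|u\|\le 1+\eps$ and $AP^\top u\le b$ gives $AP^\top u\le b+\eps$. Hence the minimum of $(Pc)^\top u$ over the (smaller) feasible set of $(P^-_\eps)$ is at least its minimum over $(P^+_\eps)$, and since $(Pc)^\top u^\pm_\eps = c^\top P^\top u^\pm_\eps = c^\top x^\pm_\eps$, this reads $c^\top x^+_\eps\le c^\top x^-_\eps$.

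For the second inequality the plan is to contract the optimizer $u^+_\eps$ of $(P^+_\eps)$ towards the centre of a ball inscribed in $S^+_\eps$ until it becomes feasible for $(P^-_\eps)$, while tracking the objective. Write $r^\ast=\mbox{\sc full}(S^\ast)$. If $r^\ast<3\eps$ there is nothing to prove: using $\|P\|_2\le 1+\eps$ (the remark after Lemma~\ref{Zhang}) one has $\|x^-_\eps\|=\|P^\top u^-_\eps\|\le(1+\eps)(1-\eps)\le 1$ and $\|x^+_\eps\|\le(1+\eps)^2\le 4$, so $c^\top x^-_\eps-c^\top x^+_\eps\le\|c\|(\|x^-_\eps\|+\|x^+_\eps\|)\le 5\|c\|<\frac{18\eps}{r^\ast}\|c\|$. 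So assume $r^\ast\ge 3\eps$, let $B(x_0,r^\ast)\subseteq S^\ast$ be an inscribed ball, and put $z_0=Px_0$. As in the proof of Lemma~\ref{fullness-lem}, $Ax_0\le b-r^\ast$ (equality-case Cauchy--Schwarz applied to $B(x_0,r^\ast)\subseteq S^\ast$), so by Lemma~\ref{jll-approx}(ii) applied to $x_0$ (and $\|x_0\|\le 1$) we get, with high probability, $AP^\top z_0 = AP^\top P x_0 \le Ax_0+\eps\mathbf{1}\le b+\eps-r^\ast$; also $\|z_0\|\le(1+\eps)\|x_0\|\le(1+\eps)(1-r^\ast)$ since $\|x_0\|\le 1-r^\ast$.

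Now set $\lambda=3\eps/r^\ast\in(0,1]$ and $u_\lambda=(1-\lambda)u^+_\eps+\lambda z_0$. Convexity gives $AP^\top u_\lambda\le(1-\lambda)(b+\eps)+\lambda(b+\eps-r^\ast)=b+\eps-\lambda r^\ast=b-2\eps\le b$, and $\|u_\lambda\|\le(1-\lambda)(1+\eps)+\lambda(1+\eps)(1-r^\ast)=(1+\eps)(1-\lambda r^\ast)=(1+\eps)(1-3\eps)\le 1-\eps$, so $u_\lambda$ is feasible for $(P^-_\eps)$. Optimality of $u^-_\eps$ then yields $c^\top x^-_\eps=(Pc)^\top u^-_\eps\le(Pc)^\top u_\lambda=c^\top x^+_\eps+\lambda\big[(Pc)^\top z_0-c^\top x^+_\eps\big]$. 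It remains to bound the bracket: $(Pc)^\top z_0=\langle Pc,Px_0\rangle\le\langle c,x_0\rangle+\eps\|c\|\,\|x_0\|\le(1+\eps)\|c\|\le 2\|c\|$ by Lemma~\ref{jll-approx}(i), and $-c^\top x^+_\eps\le\|c\|\,\|x^+_\eps\|\le(1+\eps)^2\|c\|\le 4\|c\|$; hence the bracket is $\le 6\|c\|$ and $c^\top x^-_\eps\le c^\top x^+_\eps+6\lambda\|c\|=c^\top x^+_\eps+\frac{18\eps}{r^\ast}\|c\|$.

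Finally I would collect the handful of high-probability events invoked — $\|P\|_2\le 1+\eps$ from Lemma~\ref{Zhang}, the row-norm preservation used via Lemma~\ref{jll-approx}(ii) for $(A,x_0)$, and one instance of Lemma~\ref{jll-approx}(i) for $(c,x_0)$ — apply the union bound under the standing conditions on $d$ and $n$, and absorb the constant factor into a rescaling of $\delta$ to reach probability $\ge 1-2\delta$. The genuinely delicate step is the middle one: the contraction parameter $\lambda$ must be a large enough constant multiple of $\eps/\mbox{\sc full}(S^\ast)$ to push $u^+_\eps$ simultaneously inside all the tightened linear constraints and inside the shrunk ball $\|u\|\le 1-\eps$, yet small enough that the objective moves by only $O\!\big(\eps/\mbox{\sc full}(S^\ast)\big)\|c\|$; reconciling these two requirements with explicit constants, using the fullness estimate of Lemma~\ref{fullness-lem}, is the crux of the argument.
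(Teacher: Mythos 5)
Your proof is correct and follows the same underlying strategy as the paper: contract the optimizer $u^+_\eps$ of $(P^+_\eps)$ toward the centre of an inscribed ball via a convex combination with $\lambda$ of order $\eps/\mbox{\sc full}(S^*)$, check feasibility for $(P^-_\eps)$, and bound the resulting change in the objective. There are, however, two worthwhile differences. First, the paper works with the \emph{maximal} inscribed ball $B(u_0,r_0)$ of the projected set $S^+_\eps$ and then invokes Lemma~\ref{fullness-lem} to get $r_0\ge(1-\eps)r^*$; you instead contract toward $z_0=Px_0$, the image of the centre of the ball inscribed in $S^*$, and re-derive the needed slack inequalities ($AP^\top z_0\le b+\eps-r^*$ and $\|z_0\|\le(1+\eps)(1-r^*)$) directly from Lemma~\ref{jll-approx}(ii) and Lemma~\ref{Zhang}. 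This bypasses Lemma~\ref{fullness-lem} entirely and saves its extra failure probability, at no cost in the final constant (you still land exactly on $18\eps/r^*$). Second, you explicitly dispose of the degenerate case $r^*<3\eps$, where $\lambda=3\eps/r^*$ would exceed $1$ and the convex combination would be meaningless; the paper's choice $\lambda=2\eps/r_0$ has the same latent issue when $r_0<2\eps$ and is passed over in silence, so your case split actually closes a small gap in the published argument. The probability bookkeeping at the end is stated loosely (``absorb into a rescaling of $\delta$''), but this is no worse than the paper's own accounting, which cites a lemma holding with probability $1-3\delta$ inside a claim of $1-2\delta$.
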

\begin{proof}

Let $B(u_0, r_0)$ a closed ball with maximum radius that is contained in $S^+_\eps$.

In order to establish the relation between $u^+_\eps$ and  $u^-_\eps$, our idea is to move $u^+_\eps$ closer to $u_0$ (defined in the above lemma), so that the new point is contained in $S^-_\eps$. Therefore, its objective value will be at least that of $u^-_\eps$, but quite close to the objective value of $u^+_\eps$.

\begin{figure}[h]
	\centering
	\includegraphics[width=8cm]{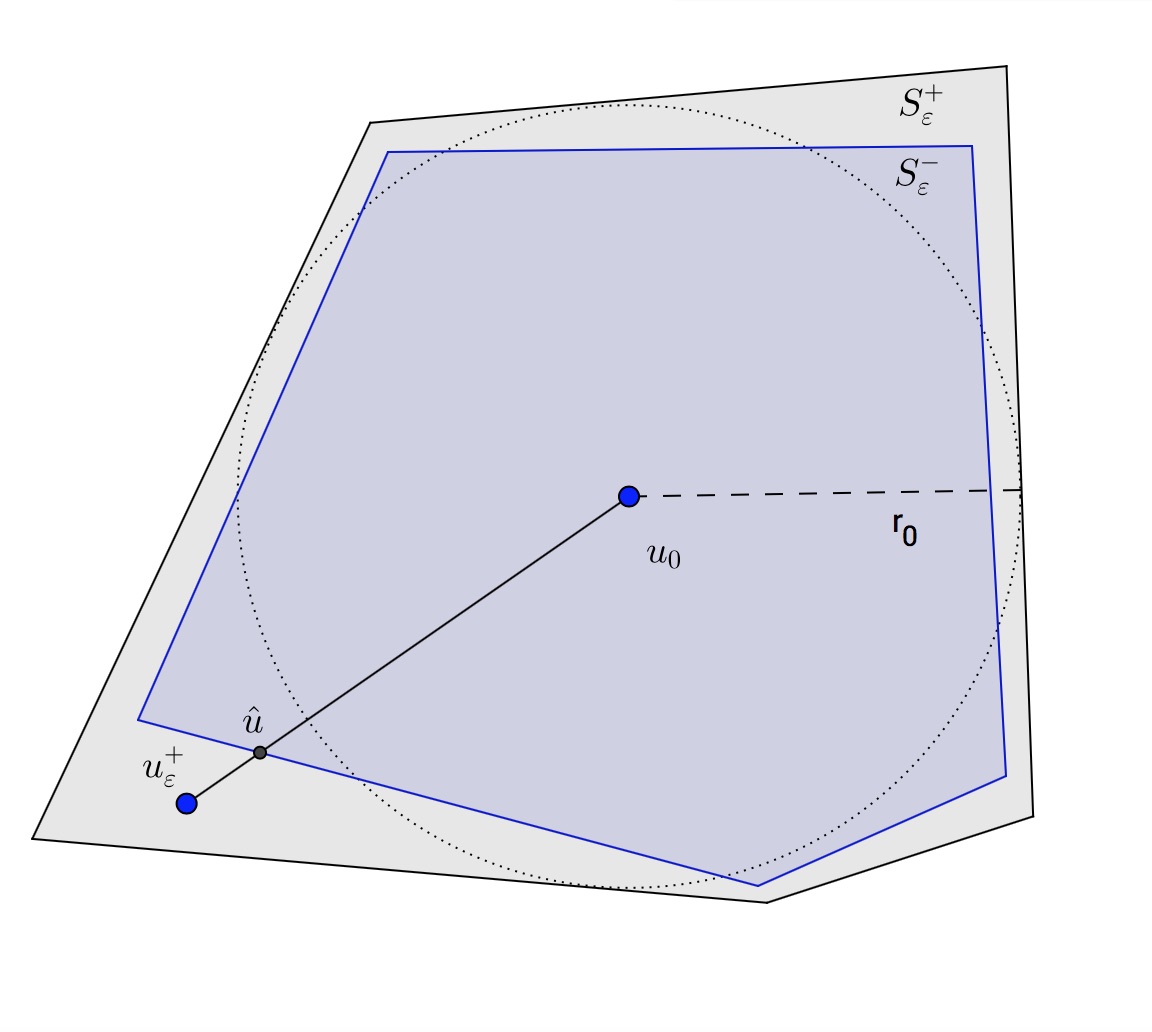}
	\caption{Idea of the proof}
\end{figure}

We define $\hat{u} : = (1-\lambda) u_{\varepsilon}^{+} + \lambda u_0$ for some $\lambda \in (0,1)$ specified later. We want to find $\lambda$ such that $\hat{u}$ is feasible for $P_{\varepsilon}^{-}$ while its corresponding objective value is not so different from $c^{\top}x_{\varepsilon}^{+} $. 

Since for all $\|u\| \le r_0$:
$$AP^{\top} (u_0 + u) = AP^{\top} u_0 + AP^{\top} u \le  b + \varepsilon,$$
then, similarly to Eq. \eqref{eq:1}, 
$$ AP^{\top} u_0 \le  b  + \varepsilon - r_0 \begin{pmatrix}
\|A_1P^\top\| \\
\vdots \\
\|A_mP^\top\|
\end{pmatrix} .$$
Therefore, we have, with probability at least $1-\delta$,
$$ AP^{\top} u_0 \le  b  + \varepsilon - r_0 (1-\eps) \begin{pmatrix}
\|A_1\| \\
\vdots \\
\|A_m\|
\end{pmatrix}=  b  + \varepsilon - r_0 (1-\eps).$$
Hence
$$AP^{\top} \hat{u} =  (1-\lambda) AP^{\top}u_{\varepsilon}^{+} + \lambda AP^{\top}u_0 \le b  + \varepsilon - \lambda r_0 (1-\eps) \le b  + \varepsilon - \frac{1}{2}\lambda r_0 ,$$
as we can assume w.l.o.g.~that $\eps \le \frac{1}{2}$. Hence, $AP^{\top} \hat{u} \le b$ if we choose $\varepsilon \le \lambda \frac{r_0}{2} $.
Moreover, let  $u\in B(u_0, r_0)$ such that $u$ is collinear with $0$ and $u_0$, and such that with $\|u\|=r_0$; we then have 
$$\|u_0\| \le  \|u_0 + u\| - r_0 \le 1 + \varepsilon - r_0,$$
so that
$$\|\hat{u}\| \le  (1-\lambda) \|u_{\varepsilon}^{+} \|+ \lambda \|u_0\| \le  (1-\lambda) (1 + \eps) + \lambda (1 + \varepsilon - r_0) = 1 + \eps - \lambda r_0,$$
which is less than or equal to $1-\eps$ for
$$\varepsilon \le \frac{1}{2} \lambda r_0.$$
 We now can choose $\lambda = 2\frac{\eps}{r_0 }$. With this choice, $\hat{u}$ is a feasible point for the problem $P^-_\eps$. Therefore,  we have
$$c^{\top} P^{\top} u^-_\eps \le c^{\top} P^{\top} \hat{u} = c^{\top} P^{\top} u_{\varepsilon}^{+} + \lambda c^{\top} P^{\top} (u_0 - u_{\varepsilon}^{+} ) \le 
c^{\top} P^{\top} u_{\varepsilon}^{+} +  \frac{4 (1+\eps)\eps}{r_0 }\|Pc\|.$$
By the above Lemma, we know that $r_0 \ge (1- \eps) r^*$, therefore, we have:
$$c^{\top} P^{\top} u^-_\eps \le c^{\top} P^{\top} \hat{u} \le 
c^{\top} P^{\top} u_{\varepsilon}^{+} +  \frac{4 (1+\eps)\eps}{r_0  }\|Pc\| \le  c^{\top} P^{\top} u_{\varepsilon}^{+} +  \frac{4(1+\eps)^2\eps}{(1- \eps) r^*  }\|c\|,$$
with probability at least $1 - \delta$.

 The claim of the theorem now follows, by noticing that, when $0 \le \eps \le \frac{1}{2}$, we can simplify:
$$\frac{2(1+\eps)^2}{(1- \eps)} \le \frac{2(1 + \frac{1}{2})^2}{(1- \frac{1}{2})} = 9.$$
\end{proof}

\subsection{Trust region subproblems with quadratic models}
In this subsection, we consider the case when the surrogate models using in TR methods are quadratic and defined as follows: 
\begin{eqnarray}  \label{TR:main-prob-quadratic}
\min \, \{x^\top Q x + c^\top x \; | \;  A x \le  b, \; \|x\| \le 1, x \in \mathbb{R}^n \}.
\end{eqnarray}
Similar to the previous section, we also study the relations between this and two other problems:

\begin{align}   \tag{$Q^{-}_{\varepsilon}$}
\min \, \{u^{\top} PQP^\top u + (Pc)^\top u\; | \;  AP^{\top} u \le  b, \; \|u\| \le 1-\varepsilon, u \in \mathbb{R}^d \} 
\end{align}
and 
\begin{align}   \tag{$Q^{+}_{\varepsilon}$}
\min \, \{u^{\top} PQP^\top u + (Pc)^\top u \; | \;  AP^{\top} u \le  b + \varepsilon, \; \|u\| \le 1 + \varepsilon, u \in \mathbb{R}^d \}. 
\end{align}

We will just state the following feasibility result, as the proof is very similar to that of Thm.~\ref{thm:feasibility1}.
\begin{theorem}
	\label{thm:feasibility2}
	Let $P \in \mathbb{R}^{d \times n}$ be a random matrix in which each entry is an i.i.d $\mathcal{N}(0, \frac{1}{\sqrt{n}})$ random variable. Let $\delta \in (0,1)$. Assume further that 
	$$n \ge \frac{(d+1) \log (\frac{2d}{\delta})} {\mathcal{C}_1 \varepsilon^2},$$
	for some universal constant $\mathcal{C}_1> \frac{1}{4}$. Then with probability at least $1- \delta$, for any feasible solution $u$ of the projected problem  ($Q^{-}_{\varepsilon}$), $P^\top u$ is also feasible for the original problem (\ref{TR:main-prob-quadratic}). 
\end{theorem}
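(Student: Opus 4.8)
\emph{Proof proposal.} The plan is to reproduce the argument of Theorem~\ref{thm:feasibility1} essentially verbatim, the key observation being that feasibility of a point depends only on the constraints, not on the objective. The constraint set of $(Q^{-}_{\varepsilon})$, namely $\{u\in\mathbb{R}^d \mid AP^\top u \le b,\ \|u\|\le 1-\varepsilon\}$, is identical to that of $(P^{-}_{\varepsilon})$, and the constraint set of \eqref{TR:main-prob-quadratic} is identical to that of \eqref{TR:main-prob-linear}. Hence the quadratic terms $u^\top PQP^\top u$ and $x^\top Qx$ play no role in this statement, and nothing in the proof of Theorem~\ref{thm:feasibility1} used linearity of the objective.

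Concretely, first I would invoke Lemma~\ref{Zhang} with the given lower bound on $n$: with probability at least $1-\delta$ the single deterministic event $\|PP^\top - I\|_2 \le \varepsilon$ holds, and I work on this event for the rest of the argument. Then, given any feasible $u$ for $(Q^{-}_{\varepsilon})$, set $\hat{x} = P^\top u$. The linear constraints transfer immediately: $A\hat{x} = AP^\top u \le b$. For the ball constraint, expand $\|\hat{x}\|^2 = u^\top P^\top P u = \|u\|^2 + u^\top(P^\top P - I)u$; since $P^\top P - I$ and $PP^\top - I$ have the same nonzero spectrum, Cauchy--Schwarz gives $u^\top(P^\top P - I)u \le \varepsilon\|u\|^2$, so $\|\hat{x}\|^2 \le (1+\varepsilon)\|u\|^2$ and therefore $\|\hat{x}\| \le \sqrt{1+\varepsilon}\,\|u\| \le (1+\tfrac{\varepsilon}{2})\|u\|$. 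Using $\|u\|\le 1-\varepsilon$ yields $\|\hat{x}\| \le (1+\tfrac{\varepsilon}{2})(1-\varepsilon) < 1$, so $\hat{x}$ is feasible for \eqref{TR:main-prob-quadratic}.

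There is no genuine obstacle here; the only point requiring a little care — and it is already handled by the way Lemma~\ref{Zhang} is stated — is the \emph{universal} quantification over $u$: we want one realization of $P$ for which $P^\top u$ is simultaneously feasible for \emph{all} feasible $u$. This comes for free because the entire estimate rests on the single inequality $\|PP^\top - I\|_2 \le \varepsilon$, which, once it holds, bounds $u^\top(P^\top P - I)u$ uniformly in $u$. Consequently the failure probability remains $\delta$ no matter how many points $u$ are considered, exactly as in Theorem~\ref{thm:feasibility1}, and the statement follows.
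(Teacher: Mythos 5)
Your proposal is correct and matches the paper's approach exactly: the paper omits the proof precisely because, as you observe, feasibility depends only on the constraints, which are identical to those of ($P^{-}_{\varepsilon}$) and \eqref{TR:main-prob-linear}, so the argument of Theorem~\ref{thm:feasibility1} carries over verbatim. (Minor note: for $u\in\mathbb{R}^d$ one has $\|P^\top u\|^2=u^\top PP^\top u$, so the relevant matrix is $PP^\top-I$ throughout; this is a typo you inherited from the paper's own proof and your remark about the shared nonzero spectrum already covers it.)
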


Let $u_{\varepsilon}^{-}$ and $u_{\varepsilon}^{+}$ be optimal solutions for these two problems, respectively. Denote by $x_{\varepsilon}^{-} = P^\top u_\varepsilon^{-}$ and $x_{\varepsilon}^{+} = P^\top u_\varepsilon^{+}$.  
Let $x^*$ be an optimal solution for the original problem (\ref{TR:main-prob-quadratic}). We will bound $x^{*\top} Q x^*+ c^\top x^*$ between $x^{-\top}_\varepsilon Q x^-_\varepsilon + c^\top x^{-\top}_\varepsilon$ and  $x^{+\top}_\varepsilon Q x^+_\varepsilon + c^\top x^{+\top}_\varepsilon$, the two values that are expected to be approximately close to each other. 

\begin{theorem}
	Let $P \in \mathbb{R}^{d \times n}$ be a random matrix in which each entry is an i.i.d $\mathcal{N}(0, \frac{1}{\sqrt{n}})$ random variable. Let $\delta \in (0,1)$. 
	Let $x^*$ be an optimal solution for the original problems (\ref{TR:main-prob-quadratic}). 
	Then there are universal constants  $\mathcal{C}_0 > 1$ and $\mathcal{C}_1 > \frac{1}{4}$ such that if the two following conditions 
	\[
	d \ge \frac{\log (m/\delta)}{\mathcal{C}_0 \eps^2},
	\quad \mbox{and } \quad n \ge \frac{(d+1) \log (\frac{2d}{\delta})} {\mathcal{C}_1 \varepsilon^2}, 
	\]
	are satisfied, we will have:\\
	(i) With probability at least $1 - \delta $,  the solution $x_{\varepsilon}^{-}$  is feasible for the original problem  (\ref{TR:main-prob-quadratic}). \\
	(ii)   With probability at least $1 - \delta$, we have:
	$$ x_{\varepsilon}^{-\top} Q x_{\varepsilon}^{-} + c^\top x_{\varepsilon}^{-} \ge x^{*\top} Q x^{*\top}+c^\top  x^{*\top}  \ge x_{\varepsilon}^{+\top} Q x_{\varepsilon}^{+} + c^\top x_{\varepsilon}^{+}  - 3\varepsilon \|Q\|_* - \varepsilon \|c\|. $$
\end{theorem}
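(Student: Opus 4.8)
The plan is to follow the proof of the corresponding linear‑model theorem almost verbatim, substituting the quadratic estimate Lemma~\ref{jll-approx}(iii) for the linear‑term estimate wherever the objective is touched. Fix $\mathcal{C}_0$ as in Lemma~\ref{jll-approx} and $\mathcal{C}_1$ as in Lemma~\ref{Zhang}. As before, the second hypothesis $n \ge (d+1)\log(2d/\delta)/(\mathcal{C}_1\varepsilon^2)$ is exactly what Lemma~\ref{Zhang} (hence Theorem~\ref{thm:feasibility2}) requires, and the first hypothesis $d \ge \log(m/\delta)/(\mathcal{C}_0\varepsilon^2)$ is equivalent to $m\,e^{-\mathcal{C}_0\varepsilon^2 d}\le\delta$; at the end we enlarge $\mathcal{C}_0$ so that the accumulated failure probability is at most $\delta$. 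Part (i) is then immediate: Theorem~\ref{thm:feasibility2} says that with probability at least $1-\delta$, for \emph{every} feasible $u$ of $(Q^-_\varepsilon)$ the point $P^\top u$ is feasible for~\eqref{TR:main-prob-quadratic}; taking $u = u^-_\varepsilon$ gives that $x^-_\varepsilon$ is feasible, and on that same event the left inequality of part (ii) follows from optimality of $x^*$, namely $x_\varepsilon^{-\top}Qx_\varepsilon^- + c^\top x_\varepsilon^- \ge x^{*\top}Qx^* + c^\top x^*$.

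For the right inequality, set $\hat u = Px^*$. First show $\hat u$ is feasible for $(Q^+_\varepsilon)$ with high probability: by Lemma~\ref{jll-approx}(ii) (after the WLOG rescaling of the rows of $A$ to unit vectors used in the linear case) together with $\|x^*\|\le 1$ we get $AP^\top\hat u = AP^\top Px^* \le Ax^* + \varepsilon\mathbf{1} \le b+\varepsilon$ with probability at least $1-4m\,e^{-\mathcal{C}_0\varepsilon^2 d}$, and by Eq.~\eqref{RP}, $\|\hat u\| = \|Px^*\| \le (1+\varepsilon)\|x^*\| \le 1+\varepsilon$ with probability at least $1-2\,e^{-\mathcal{C}_0\varepsilon^2 d}$. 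In particular the feasible set of $(Q^+_\varepsilon)$ is nonempty (and bounded), so $u^+_\varepsilon$ is well defined.

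Next, apply Lemma~\ref{jll-approx}(iii) with $x=y=x^*$ to the quadratic term and Lemma~\ref{jll-approx}(i) with $(x,y)=(c,x^*)$ to the linear term, using $\|x^*\|\le 1$, to obtain
\[
x^{*\top}Qx^* + c^\top x^* \ \ge\ x^{*\top}P^\top PQP^\top Px^* + c^\top P^\top Px^* - 3\varepsilon\|Q\|_* - \varepsilon\|c\|
\]
with probability at least $1-(8k+4)\,e^{-\mathcal{C}_0\varepsilon^2 d}$. Since $x^{*\top}P^\top PQP^\top Px^* = \hat u^\top PQP^\top\hat u$ and $c^\top P^\top Px^* = (Pc)^\top\hat u$, and since $\hat u$ is feasible for $(Q^+_\varepsilon)$ while $u^+_\varepsilon$ is its minimizer and $x^+_\varepsilon = P^\top u^+_\varepsilon$,
\[
\hat u^\top PQP^\top\hat u + (Pc)^\top\hat u \ \ge\ u_\varepsilon^{+\top}PQP^\top u^+_\varepsilon + (Pc)^\top u^+_\varepsilon \ =\ x_\varepsilon^{+\top}Qx^+_\varepsilon + c^\top x^+_\varepsilon .
\]
Chaining the last two displays yields $x^{*\top}Qx^* + c^\top x^* \ge x_\varepsilon^{+\top}Qx^+_\varepsilon + c^\top x^+_\varepsilon - 3\varepsilon\|Q\|_* - \varepsilon\|c\|$, which is the right inequality.

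Finally, take the union bound over the event of Theorem~\ref{thm:feasibility2} (probability $\ge 1-\delta$), the feasibility of $\hat u$ for $(Q^+_\varepsilon)$, and the objective estimate above; the total failure probability is at most $\delta + (4m + 8k + 6)\,e^{-\mathcal{C}_0\varepsilon^2 d}$, and choosing $\mathcal{C}_0$ large enough — exactly as in the linear‑model theorem, where the factor $4m+6$ is absorbed — makes this $\le \delta$. I expect no genuinely new analytic estimate to be needed; the only point requiring slightly more care than the linear case is that Lemma~\ref{jll-approx}(iii) contributes a factor equal to the rank $k$ of $Q$, which one absorbs into $\mathcal{C}_0$ just as the $O(m)$ constants are absorbed (or, since $k\le n$, one notes that strengthening the hypothesis on $d$ by a $\log(k/\delta)$ term does not change its order of magnitude). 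This bookkeeping is the only mild obstacle.
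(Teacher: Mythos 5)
Your proposal follows essentially the same route as the paper's own proof: part (i) via Theorem~\ref{thm:feasibility2}, the left inequality from feasibility plus optimality of $x^*$, and the right inequality by showing $\hat u = Px^*$ is feasible for $(Q^+_\varepsilon)$ and combining Lemma~\ref{jll-approx}(i) and (iii) with the optimality of $u^+_\varepsilon$. If anything, your bookkeeping of the rank-dependent factor $8k$ in the failure probability is more careful than the paper's, which tallies only $(4m+6)e^{-\mathcal{C}_0\varepsilon^2 d}$ at the end.
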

\begin{proof} The constants $\mathcal{C}_0$ and $\mathcal{C}_1$ are chosen in the same way as before. \\
	(i) From the previous theorem, with probability at least $1 - \delta $, for any feasible point  $u$ of the projected problem  ($P^{-}_{\varepsilon}$), $P^\top u$ is also feasible for the original problem (\ref{TR:main-prob-quadratic}). Therefore, it must hold also for $x_{\varepsilon}^{-}$.
	
	(ii) From part (i), with probability at least $1- \delta$, $x_{\varepsilon}^{-}$  is feasible for the original problem  (\ref{TR:main-prob-quadratic}). Therefore, we have
	\begin{align*}
	x_{\varepsilon}^{-\top} Q x_{\varepsilon}^{-} + c^\top x_{\varepsilon}^{-}\ge x^{*\top} Q x^{*} +c^\top x^{*},
	\end{align*}
	with probability at least $1- \delta$.
	
	Moreover, due to Lemma \ref{jll-approx}, with probability at least $1 - 8(k+1)e^{-\mathcal{C}_0\varepsilon^2 d}$, where $k$ is the rank of $Q$, we have
	\begin{align*}
	x^{*\top} Q x^{*} \ge x^{*\top} P^\top PQ P^\top P x^{*} - 3 \varepsilon \|x^*\|^2\, \|Q\|_*  \ge x^{*\top} P^\top PQ P^\top P x^{*} - 3 \varepsilon \|Q\|_*,
	\end{align*}
	and 
	\begin{align*}
	c^{\top} x^* \ge c^{\top} P^{\top} P x^* - \varepsilon \|c\|\, \|x^*\| \ge c^{\top} P^{\top} P x^* - \varepsilon \|c\|,
	\end{align*}
	since $\|x^*\| \le 1$. 
	Hence 
	\begin{align*}
	x^{*\top} Q x^{*} +	c^{\top} x^* \ge x^{*\top} P^\top PQ P^\top P x^{*} + c^{\top} P^{\top} P x^* - \varepsilon \|c\| - 3 \varepsilon \|Q\|_*,
	\end{align*}
	
	 On the other hand, let $\hat{u} = Px^*$, due to Lemma \ref{jll-approx}, we have
	$$ AP^\top \hat{u} = AP^\top P x^*  \le  Ax^*+  \varepsilon \|x^*\|  \begin{bmatrix}1 \\ \ldots \\ 1\end{bmatrix} \le Ax^* +  \varepsilon \begin{bmatrix}1 \\ \ldots \\ 1\end{bmatrix} \le b + \eps,$$
	with probability at least $1 - 4me^{-\mathcal{C}_0\varepsilon^2 d}$, and
	$$\|\hat{u}\| = \|Px^*\| \le (1+ \eps) \|x^*\| \le (1+ \eps),$$
	with probability at leats $1 - 2e^{-\mathcal{C}_0\varepsilon^2 d}$ (by Lemma \ref{jll-approx}).
	Therefore, $\hat{u}$ is a feasible solution for the problem ($P^{+}_{\varepsilon}$) with probability at least $1 - (4m+2)e^{-\mathcal{C}_0\varepsilon^2 d}$. Due to the optimality of $u^+_\eps$ for the problem ($P^{+}_{\varepsilon}$), it follows that
	\begin{align*}
	x^{*\top} Q x^{*} +	c^{\top} x^* &\ge x^{*\top} P^\top PQ P^\top P x^{*} + c^{\top} P^{\top} P x^* - \varepsilon \|c\| - 3 \varepsilon \|Q\|_*,\\ 
	&= \hat{u}^{\top} PQ P^\top \hat{u} + c^{\top} P^{\top} \hat{u}  - \varepsilon \|c\|- 3 \varepsilon \|Q\|_* \\ 
	&\ge u_\varepsilon^{+\top} PQ P^\top u_\varepsilon^{+} +(Pc)^\top  u_\varepsilon^{+\top} - \varepsilon \|c\| - 3 \varepsilon \|Q\|_* \\
	& = x_\eps^{+\top} Q x_\eps^{+} + c^\top x_\eps^{+\top}  - \varepsilon \|c\|- 3 \varepsilon \|Q\|_*,
	\end{align*}
	with probability at least $1 - (4m+6)e^{-\mathcal{C}_0\varepsilon^2 d}$, which is at least $1- \delta$ for the chosen universal constant $\mathcal{C}_0$.
	Moreover,
	\begin{align*}
	c^{\top} x^* \ge  c^{\top} P^{\top} P x^* - \varepsilon \|c\| = c^{\top} P^{\top} \hat{u} - \varepsilon \|c\| \ge c^{\top} P^{\top} u_{\varepsilon}^{+} - \varepsilon \|c\| = c^{\top} x_{\varepsilon}^{+} - \varepsilon \|c\|,
	\end{align*}
	Hence 
	$$x^{*\top} Q x^{*\top}+c^\top  x^{*\top}  \ge x_{\varepsilon}^{+\top} Q x_{\varepsilon}^{+} + c^\top x_{\varepsilon}^{+}  - 3\varepsilon \|Q\|_* - \varepsilon \|c\| $$
	which concludes the proof.
\end{proof}

The above result implies that the value of $x^{*\top} Q x^{*}+ c^\top x^* $ lies between $x_\eps^{-\top} Q x_\eps^{-} +c^\top x_\eps^{-} $ and $x_\eps^{+\top} Q x_\eps^{+} +c^\top x_\eps^{+} $. It remains to prove that these two values are not so far from each other.  For that, we also use the definition of fullness measure. We have the following result:
\begin{theorem}\label{thm:sandwich2}
	Let $0 < \varepsilon < 0.1$. Then with probability at least $1 - 4\delta$, we have
	$$ x_\eps^{+\top} Q x_\eps^{+}+ c^\top x_\eps^{+} \le x_\eps^{-} Q x_\eps^{-}+c^\top x_\eps^{-} < x_\eps^{+\top} Q x_\eps^{+} + c^\top x_\eps^{+} +  \frac{\eps}{\mbox{\sc full}(S^*)}(36+18\|c\|)  .$$ 
\end{theorem}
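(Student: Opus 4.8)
The plan is to follow the proof of Theorem~\ref{thm:sandwich1} almost verbatim, inserting a single extra estimate to handle the quadratic part of the objective (convexity of the objective is never needed, so $Q$ being indefinite causes no trouble). Throughout, write $\phi(u) = u^\top PQP^\top u + (Pc)^\top u$ for the objective common to $(Q^-_\eps)$ and $(Q^+_\eps)$, and observe $\phi(u) = (P^\top u)^\top Q(P^\top u) + c^\top(P^\top u)$, so $\phi(u^\pm_\eps) = x_\eps^{\pm\top} Q x_\eps^{\pm} + c^\top x_\eps^{\pm}$. The left inequality is then immediate and deterministic: the feasible set of $(Q^-_\eps)$ is contained in that of $(Q^+_\eps)$ and both problems minimise the same $\phi$, hence $\phi(u^+_\eps) \le \phi(u^-_\eps)$.

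For the right inequality I would reuse the construction from the proof of Theorem~\ref{thm:sandwich1}. Let $B(u_0,r_0)$ be a ball of largest radius inside $S^+_\eps$, so $r_0 = \mbox{\sc full}(S^+_\eps) \ge (1-\eps)r^*$ with probability at least $1-3\delta$ by Lemma~\ref{fullness-lem}; set $\lambda = 2\eps/r_0$ and $\hat u = (1-\lambda)u^+_\eps + \lambda u_0$ (one may assume $\lambda < 1$, since otherwise $\mbox{\sc full}(S^*)$ is so small that the right-hand side of the claimed inequality already exceeds every value $\phi$ takes on the unit ball and there is nothing to prove). The feasibility argument in the proof of Theorem~\ref{thm:sandwich1} only uses the constraints of $(P^\pm_\eps)$ — which are identical to those of $(Q^\pm_\eps)$ — together with $\|A_iP^\top\| \le 1+\eps$ for all $i$; it therefore shows word for word that $\hat u$ is feasible for $(Q^-_\eps)$. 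By optimality of $u^-_\eps$, $\phi(u^-_\eps) \le \phi(\hat u)$, so everything reduces to bounding $\phi(\hat u) - \phi(u^+_\eps)$.

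To bound this gap, write $\hat u = u^+_\eps + \Delta$, $\Delta = \lambda(u_0 - u^+_\eps)$, so $\|\Delta\| \le 2\lambda(1+\eps)$ as $u_0,u^+_\eps \in S^+_\eps$. The linear part of $\phi$ changes by $(Pc)^\top\Delta \le \|Pc\|\,\|\Delta\| \le 2\lambda(1+\eps)^2\|c\|$, exactly as in the linear case. The quadratic part changes by $u_\eps^{+\top}PQP^\top\Delta + \Delta^\top PQP^\top u^+_\eps + \Delta^\top PQP^\top\Delta$, and bounding each term by the operator norm $\|PQP^\top\|_2 \le \|P\|_2^2\|Q\|_2 \le (1+\eps)^2\|Q\|_*$ together with $\|u^+_\eps\| \le 1+\eps$ and $\|\Delta\| \le 2\lambda(1+\eps)$ gives at most $4\lambda(1+\lambda)(1+\eps)^4\|Q\|_* \le 8\lambda(1+\eps)^4\|Q\|_*$ — the two symmetric cross terms are what doubles the constant relative to the linear case. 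Substituting $\lambda = 2\eps/r_0$ and $r_0 \ge (1-\eps)r^* = (1-\eps)\,\mbox{\sc full}(S^*)$ yields $\phi(\hat u) - \phi(u^+_\eps) \le \frac{\eps}{\mbox{\sc full}(S^*)}\bigl(\tfrac{16(1+\eps)^4}{1-\eps}\|Q\|_* + \tfrac{4(1+\eps)^2}{1-\eps}\|c\|\bigr)$, and for $0<\eps<0.1$ one checks $\tfrac{16(1+\eps)^4}{1-\eps} < 36$ and $\tfrac{4(1+\eps)^2}{1-\eps} < 18$ with ample slack, giving the claimed (strict) bound $\frac{\eps}{\mbox{\sc full}(S^*)}(36\|Q\|_* + 18\|c\|)$. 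Finally, all randomness is carried by two events: $\|P\|_2 \le 1+\eps$ (probability $\ge 1-\delta$, by the remark after Lemma~\ref{Zhang}), which simultaneously delivers $\|A_iP^\top\| \le 1+\eps$, $\|Pc\| \le (1+\eps)\|c\|$ and $\|PQP^\top\|_2 \le (1+\eps)^2\|Q\|_*$; and $r_0 \ge (1-\eps)r^*$ (probability $\ge 1-3\delta$, by Lemma~\ref{fullness-lem}). A union bound gives the overall probability at least $1-4\delta$.

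The only genuinely new ingredient beyond Theorem~\ref{thm:sandwich1} is the perturbation estimate for the fixed quadratic form $u\mapsto u^\top PQP^\top u$ along the short segment from $u^+_\eps$ to $u_0$, and its one nontrivial input — the operator-norm bound $\|PQP^\top\|_2 \le (1+\eps)^2\|Q\|_*$ — is elementary. I therefore expect no conceptual obstacle: the only real work is bookkeeping, i.e.\ tracking the numerical constants (with deliberately crude bounds such as $\|u_0-u^+_\eps\| \le 2(1+\eps)$ and $1+\lambda \le 2$) and the failure probabilities so that they collapse to $36$, $18$ and $1-4\delta$ under the hypothesis $\eps<0.1$.
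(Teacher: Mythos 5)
Your construction is the paper's own, essentially line for line: the same $\hat u=(1-\lambda)u^+_\eps+\lambda u_0$ with $\lambda=2\eps/r_0$, the same feasibility argument recycled from Theorem~\ref{thm:sandwich1}, the same term-by-term operator-norm bound on the two cross terms and the quadratic term, and the same probability accounting landing at $1-4\delta$ (your explicit treatment of the left inequality and of the case $\lambda\ge 1$ is if anything slightly more careful than the paper's).

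The one point of divergence is the $\|Q\|$ factor, and it cuts against you. You bound $\|PQP^\top\|_2\le(1+\eps)^2\|Q\|_2\le(1+\eps)^2\|Q\|_*$ and end up claiming $\frac{\eps}{\mbox{\sc full}(S^*)}(36\|Q\|_*+18\|c\|)$, but the theorem asserts the constant $36$ with no $\|Q\|$ dependence; since $\|Q\|_*\ge\|Q\|_2$ (with equality only for rank one), your bound is strictly weaker than what is claimed whenever $\|Q\|_*>1$. The paper gets the bare $36$ by stopping at $\|Q\|_2$ and then invoking the (nowhere-stated) normalization $\|Q\|_2=1$, so the statement as printed is only correct under that hidden assumption; your proof would match it exactly if you simply did not weaken $\|Q\|_2$ to $\|Q\|_*$ and made the same normalization explicit. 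This is a one-line repair, not a conceptual gap, but as written your final inequality does not literally deliver the displayed bound.
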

\begin{proof}
	Let $B(u_0, r_0)$ be a closed ball with maximum radius that is contained in $S^+_\eps$.
	
	We define $\hat{u} : = (1-\lambda) u_{\varepsilon}^{+} + \lambda u_0$ for some $\lambda \in (0,1)$ specified later. We want to find ``small" $\lambda$ such that $\hat{u}$ is feasible for $Q_{\varepsilon}^{-}$ while its corresponding objective value is still close to $x_\eps^{+\top} Q x_\eps^{+} + c^\top x_\eps^{+} $. 
	
	Similar to the proof of Th. \ref{thm:sandwich1}, when we choose 
 $\lambda = 2\frac{\eps}{r_0 }$, then $\hat{u}$ is feasible for the problem $Q^-_\eps$ with probability at least $1 - \delta$.
	
	Therefore,  $u^{-\top}_\eps PQP^\top u^{-}_\eps + (Pc)^\top  u^{-}_\eps$ is smaller than or equal to
	\begin{align*}
	& \mbox{ } \hat{u}^{\top} PQP^\top \hat{u} + (Pc)^\top \hat{u} \\ 
	&= \big(u^+_\eps + \lambda (u_0 - u^+_\eps ) \big)^{\top} PQP^\top \big(u^+_\eps + \lambda (u_0 - u^+_\eps ) \big)  + (Pc)^\top \hat{u}\\
	& = 
	u^{+\top}_\eps PQP^\top u^{+}_\eps + 
	\lambda u^{+\top}_\eps  PQP^\top \big(u_0 - u^+_\eps \big) + \lambda (u_0 - u^+_\eps )^{\top} PQP^\top u^+_\eps  \\  
	& \hspace*{1em} + \lambda^2 (u_0 - u^+_\eps )^{\top} PQP^\top (u_0 - u^+_\eps ) + (Pc)^\top \hat{u}.
	\end{align*}
	However, from Lemma \ref{jll-approx} and the Cauchy-Schwartz inequality, we have
	\begin{align*}
	u^{+\top}_\eps  PQP^\top \big(u_0 - u^+_\eps \big) & \le \|P^\top u^{+}_\eps\| \cdot \| Q \|_2 \cdot \|P^\top
	(u_0 - u^+_\eps \big) \| \\
	& \le (1 +\varepsilon)^2 \|u^{+}_\eps\| \cdot \| Q \|_2 \cdot \|
	(u_0 - u^+_\eps \big) \| \\
	& \le 2 (1+\varepsilon)^4 \; \|Q\|_2 \\
	& \quad \mbox{(Since $\|u^+_\varepsilon\|$ and $\|u^-_\varepsilon\| \le 1 + \varepsilon$)}
	\end{align*}
	Similar for other terms, then we have
	\begin{align*}
	\mbox{ } \hat{u}^{\top} PQP^\top \hat{u}  
	\le u^{+\top}_\eps PQP^\top u^{+}_\eps + 
	(4 \lambda + 4 \lambda^2) (1 + \varepsilon)^4 \; \|Q\|_2.
	\end{align*}
	Since $\eps < 0.1$, we have $(1+\varepsilon)^4 < 2$ and we can assume that $\lambda < 1$. Then we have
	\begin{align*}
	\hat{u}^{\top} PQP^\top \hat{u}  
	& < u^{+\top}_\eps PQP^\top u^{+}_\eps + 
	16 \lambda  \|Q\|_2 \\
	& = u^{+\top}_\eps PQP^\top u^{+}_\eps + \frac{32 \varepsilon}{r_0 }  \qquad \mbox{(since $\|Q\|_2 = 1$)}\\
	& \le u^{+\top}_\eps PQP^\top u^{+}_\eps + \frac{32 \varepsilon}{(1-\eps) \mbox{\sc full}(S^*) } 
	\qquad \mbox{(due to Lemma \ref{fullness-lem})}\\
	& < u^{+\top}_\eps PQP^\top u^{+}_\eps + \frac{36 \varepsilon}{\mbox{\sc full}(S^*) }  \qquad \mbox{(since $\varepsilon \le 0.1$)},
	\end{align*}
	with probability at least $1 - 2 \delta$.
	Furthermore, similarly to the proof of Th. \ref{thm:sandwich1}, we have 
	$$c^{\top} P^{\top} \hat{u} = (1-\lambda) (Pc)^\top u_{\varepsilon}^{+} + \lambda (Pc)^\top u_0  \le  c^{\top} P^{\top} u_{\varepsilon}^{+} +  \frac{4(1+\eps)^2\eps}{(1- \eps) r^*  }\|c\| \le  c^{\top} P^{\top} u_{\varepsilon}^{+} + \frac{18\eps}{\mbox{\sc full}(S^*)} \|c\|,$$ 
	with probability at least $1 - 2 \delta$.
\end{proof}

\section*{Conclusion}
In this paper, we have shown theoretically that random projection can be used in combination with trust region method to study high-dimensional derivative free optimization. We have proved that the solutions provided by solving low-dimensional projected version of trust region subproblems are good approximations of the true ones. We hope this provides a useful insight in the solution of very high dimensional derivative free problems.

\LEO{As a last remark, we note in passing that the results of this paper extend to the case of arbitrary (non-unit and non-zero) trust region radii, through scaling. This observation also suggests that the LP and QP trust region problem formulations analyzed in this paper can inner- and outer-approximate arbitrary bounded LPs and QPs.}

\bibliographystyle{plain}
\bibliography{jll_tr}


\end{document}